\documentclass[11pt, a4paper, reqno]{amsart}
\usepackage{amsthm, amsmath, amsfonts, amssymb, appendix, dsfont, latexsym,mathrsfs,mathtools}
\usepackage{esint}
\usepackage{graphicx}
\usepackage{tikz}

\makeatletter
\usepackage{delarray, a4, color}
\usepackage{euscript}
\usepackage[latin1]{inputenc}
\usepackage{enumitem}
\usepackage{hyperref}
\hypersetup{
colorlinks,
menucolor=black,
linkcolor=black,
citecolor=black,
urlcolor=blue
}
\usepackage{cleveref}
\usepackage{crossreftools}
\pdfstringdefDisableCommands{%
    \let\Cref\crtCref
    \let\cref\crtcref
}
\definecolor{darkgreen}{rgb}{0,0.5,0}
\definecolor{darkred}{cmyk}{0,1,1,0.4}

\setlength{\voffset}{0.5truein}
\setlength{\textheight}{8.25truein}
\setlength{\textwidth}{6truein}
\setlength{\hoffset}{-0.5truein}
\setlength{\oddsidemargin}{0.6truein} 
\setlength{\evensidemargin}{0.6truein}

\theoremstyle{plain}
\newtheorem{theorem}{Theorem}[section]
\crefname{maintheorem}{Theorem}{Theorems} 
\newtheorem*{theorem*}{Theorem}
\newtheorem{lemma}{Lemma}[section]
\newtheorem{corollary}{Corollary}[section]
\newtheorem{proposition}{Proposition}[section]

\theoremstyle{definition}

\newtheorem{remark}{Remark}
\newtheorem{remark*}{Remark\textup{*}}

\numberwithin{equation}{section}


\DeclareMathAlphabet{\mathpzc}{OT1}{pzc}{m}{it}

\def\C {\mathbb{C}}

\def\N {\mathbb{N}}

\def\R {\mathbb{R}}

\newcommand{\bA}{\mathbf{A}}
\newcommand{\bF}{\mathbf{F}}
\newcommand{\bJ}{\mathbf{J}}

\newcommand{\bx}{\mathbf{x}}
\newcommand{\by}{\mathbf{y}}

\newcommand{\cE}{\mathcal{E}}


\newcommand{\dd}{{\rm d}} 
\newcommand{\ii}{{\rm i}} 
\newcommand{\CH}{C_{\rm H}}

\DeclareMathOperator{\im}{\mathrm{Im}}

\DeclareMathOperator{\supp}{\mathrm{supp}}

\newcommand{\dist}{\mathrm{dist}}
\renewcommand{\div}{\operatorname{div}}

\newcommand{\loc}{\mathrm{loc}}


\usepackage{placeins}

\title[Global wellposedness of Chern-Simons-Schr\"odinger equation]{A Sharp condition on global wellposedness of Chern-Simons-Schr\"odinger equation}

\author[A. Ataei]{Alireza ATAEI}
\address{Alireza Ataei, Department of Mathematics, Uppsala University, Box 480, SE-751 06, Uppsala, Sweden}
\email{\url{alireza.ataei@math.uu.se}}

\subjclass[2010]{35Q55} 
\keywords{Golbal wellposedness, {C}hern-{S}imons-{S}chr\"odinger equation, Standing and static wave solutions}

\allowdisplaybreaks
\begin{document}

\setcounter{tocdepth}{2}

\begin{abstract}
    In this work, we derive a sharp condition on the mass of the initial data for the global existence of the Chern-Simons-Schr\"odinger equation. As a corollary, we prove that if the strength of interaction is less than the Bogomolny bound, then, for a large enough mass of initial data, there exists a globally defined solution. On the other hand, for the interactions which are above the Bogomolny bound, the critical mass condition on the initial data for the global existence depends on the strength of the self-interacting field. Then, we show that the states with the initial critical mass and zero energy are standing wave solutions and globally well-posed. Moreover, they are static if the self-interacting field is large enough and non-static for small self-interacting field.
\end{abstract}
\maketitle
\section{Introduction}
The Chern-Simons-Schr\"odinger equation has its origin in quantum mechanics as a model for anyons and fractional quantum Hall effect; see \cite{ZhaHanKiv89,JaPi90,JaWe90,Wi90,EzHoIw91,EzHoIw91b, JaPi91, JaPi91b,Jackiw1991,Du95} and references therein. It has also been studied in mathematics as a challenging model which contains a non-local and non-homogenous self-generating magnetic energy; see \cite{BeBoSa95,Hu9,ByHuSe12,Hu13,LiSmTa14,OhPu15,LiSm16,ByHuSe16,Lim18,GoZh21,LiLiu22,KiKw231} and references therein. The model can be described as follows: Let $\beta \in \R_+, \gamma \in \R$, and $\phi \in H^1(\R^2)=H^1(\R^2;\C)$. For every $f \in L^1(\R^2)$ and $\bF \in L^1(\R^2;\C^2)$, define, in the principal value sense, the self-generating gauges
\begin{align*}
    \bA[f] &:= (\nabla^{\perp} \log |\cdot|) \ast f =\int_{\R^2} \frac{(\bx-\by)^{\perp}}{|\bx-\by|^2} f(\by) \, d \by, \\
     \bA \ast [\bF] &:=  \int_{\R^2} \frac{(\bx-\by)^{\perp}}{|\bx-\by|^2} \cdot  \bF(\by) \, d \by,
\end{align*}
where $(x_1,x_2)^{\perp} = (-x_2,x_1)$ for every $(x_1,x_2) \in \R^2$.  Moreover, we consider the definition of the currents
\begin{align*}
\bJ[\phi] &:= \im(\phi^{\ast} \nabla \phi),\\
    \textbf{j}_{\beta}[\phi] &:= \im (\phi^* (\nabla + \ii \beta \bA[|\phi|^2]) \phi)=\bJ[\phi] + \beta\bA[|\phi|^2] \, |\phi|^2.
\end{align*}
Now, we define the energy-functional 
\begin{align*}
  \cE_{\beta,\gamma}[\phi] := \int_{\R^2} |(\nabla + \ii \beta \bA[|\phi|^2]) \phi|^2 - \gamma \int_{\R^2} |\phi|^4.
\end{align*}
The parameter $\beta$ shows the strength of the self-interacting magnetic field and $\gamma$ shows the strength of the interaction between the particles, where $\gamma>0$ for attractive particles and $\gamma<0$ for repulsive particles. Now, for $T>0,$ we say that $\psi:[0,T] \times \R^2 \to \C$ satisfies the Chern-Simons-Schr\"odinger equation if $\psi \in L^{\infty}([0,T], H^1(\R^2)) \cap C([0,T],L^2(\R^2))$ and
\begin{equation}
\begin{aligned}
\label{eq:CSNLS}
\ii \partial_t \psi &= \frac{1}{2}  \frac{\partial}{\partial \psi^*} \cE_{\beta,\gamma}[\psi], \\&= - \frac{1}{2} (\nabla + \ii \beta \bA[|\psi|^2]) \cdot (\nabla + \ii \beta \bA[|\psi|^2]) \psi- \beta (\bA \ast  [\textbf{j}_{\beta}[\psi]]) \psi - \gamma |\psi|^2 \psi,
\end{aligned}
\end{equation}
weakly in $[0,T] \times \R^2$, where $\psi^* = \overline{\psi}$ is the complex conjugate of $\psi.$ We use the interpolation constant 
\begin{align*}
    \gamma_{\ast}(\beta) := \inf \left \{ \frac{\int_{\R^2} |(\nabla + \ii \beta \bA[|\phi|^2]) \phi|^2}{\int_{\R^2} |\phi|^4}: \phi \in H^1(\R^2), \int_{\R^2} |\phi|^2 = 1\right\},
\end{align*}
defined in \cite{AtLuThi24} to study the global existence of time-dependent solutions for \eqref{eq:CSNLS}.
\begin{theorem}
\label{thm:criticalconstant}
Let $\psi_0 \in H^1(\R^2) \setminus 0, \beta \in \R_+$, and $ \gamma \in \R$. If
\begin{align}
\label{eq:conditionglobalNLS}
\gamma  <  \frac{\gamma_{\ast}\left(\beta \|\psi_0\|_{L^2}^2 \right)}{\|\psi_0\|_{L^2}^2 },
\end{align}
then there exists a global solution $\psi \in L^{\infty}(\R_+, H^1(\R^2)) \cap C(\R_+,L^2(\R^2))$ of \eqref{eq:CSNLS}
in $\R_+ \times \R^2 $ with the initial data $\psi_0.$
Moreover, if 
\begin{align*}
\gamma >   \frac{\gamma_{\ast}(\alpha \beta)}{\alpha },
\end{align*}
for some $\alpha >0$, then there exist $\psi_0 \in C_c^{\infty}(\R^2)$, such that $\int_{\R^2} |\psi_0|^2 =\alpha$, and a local solution $\psi$ of \eqref{eq:CSNLS} in $[0,T] \times \R^2$ with the initial data $\psi_0$ for some $T>0$, which blows up at finite time, i.e., there exists a finite time $t^*>0$ such that 
\begin{align*}
\lim_{t \to t^*} \|\nabla \psi(t,\cdot)\|_{L^2} = \infty.
\end{align*}
\end{theorem}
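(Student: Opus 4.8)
The plan is to treat the two assertions separately, in each case combining conservation of the mass $\norm{\psi(t,\cdot)}_{L^2}^2$ and of the energy $\cE_{\beta,\gamma}[\psi(t,\cdot)]$ with the local theory for \eqref{eq:CSNLS} (so that the maximal existence time can be finite only if $\norm{\nabla\psi(t,\cdot)}_{L^2}\to\infty$), and exploiting the scaling behaviour of the constant $\gamma_{\ast}$.

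For the first assertion, the key is a coercivity estimate read off from the definition of $\gamma_\ast$. Writing $M:=\norm{\psi_0}_{L^2}^2$ and $\psi=\sqrt{M}\,\varphi$ with $\norm{\varphi}_{L^2}=1$, the homogeneity $\bA[|\psi|^2]=M\,\bA[|\varphi|^2]$ yields
\begin{align*}
\int_{\R^2}|(\nabla+\ii\beta\bA[|\psi|^2])\psi|^2 = M\int_{\R^2}|(\nabla+\ii\beta M\bA[|\varphi|^2])\varphi|^2 \geq \frac{\gamma_{\ast}(\beta M)}{M}\int_{\R^2}|\psi|^4,
\end{align*}
so that
\begin{align*}
\cE_{\beta,\gamma}[\psi]\geq\Big(\frac{\gamma_{\ast}(\beta M)}{M}-\gamma\Big)\int_{\R^2}|\psi|^4.
\end{align*}
Under \eqref{eq:conditionglobalNLS} the prefactor is strictly positive, so conservation of energy bounds $\int_{\R^2}|\psi(t,\cdot)|^4$, and hence the covariant kinetic energy, uniformly in $t$. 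I would then recover control of $\norm{\nabla\psi(t,\cdot)}_{L^2}$ by expanding $|\nabla\psi|^2=|(\nabla+\ii\beta\bA[|\psi|^2])\psi|^2-2\beta\bA[|\psi|^2]\cdot\bJ[\psi]-\beta^2|\bA[|\psi|^2]|^2|\psi|^2$ and estimating the two self-generated gauge terms through Hardy--Littlewood--Sobolev and Gagliardo--Nirenberg inequalities in terms of $M$ and $\int_{\R^2}|\psi|^4$. The resulting uniform $H^1$ bound then rules out finite-time blow-up and produces the global solution.

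For the second assertion, I would first produce admissible data of negative energy. By the same rescaling, the infimum over mass-$\alpha$ fields of the ratio $\int_{\R^2}|(\nabla+\ii\beta\bA[|\phi|^2])\phi|^2\,/\int_{\R^2}|\phi|^4$ equals $\gamma_{\ast}(\alpha\beta)/\alpha$, so the hypothesis $\gamma>\gamma_{\ast}(\alpha\beta)/\alpha$ furnishes a near-minimizer $\phi$ with $\norm{\phi}_{L^2}^2=\alpha$ and $\cE_{\beta,\gamma}[\phi]<0$; mollifying, truncating, and rescaling to restore the mass gives $\psi_0\in C_c^\infty(\R^2)$ with $\int_{\R^2}|\psi_0|^2=\alpha$ and $\cE_{\beta,\gamma}[\psi_0]<0$. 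The blow-up is then driven by a virial identity. Setting $V(t):=\int_{\R^2}|\bx|^2|\psi(t,\bx)|^2\,\dd\bx$, the antisymmetric structure of $\bA$ (the factor $(\bx-\by)^{\perp}$ being orthogonal to $\bx-\by$) causes the Chern--Simons magnetic contributions to cancel, leaving $V''(t)=c\,\cE_{\beta,\gamma}[\psi(t,\cdot)]$ for a fixed constant $c>0$; here the $L^2$-criticality of the quartic term in two dimensions is precisely what makes the right-hand side proportional to the conserved energy.

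Since $\cE_{\beta,\gamma}[\psi_0]<0$ is conserved, $V$ is a downward parabola and would become negative in finite time, which is impossible. Hence the maximal existence time $t^*$ is finite, and the local theory forces $\norm{\nabla\psi(t,\cdot)}_{L^2}\to\infty$ as $t\to t^*$. I expect the main obstacle to be twofold: in the first part, cleanly dominating the non-local cubic gauge terms so as to pass from the covariant kinetic energy to genuine control of $\norm{\nabla\psi}_{L^2}$; and in the second part, verifying the exact cancellation of the magnetic terms in the virial computation, which is what renders $V''$ a clean multiple of the energy rather than an uncontrolled expression.
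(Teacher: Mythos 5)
Your treatment of the second assertion and your a priori estimate in the first assertion both coincide with the paper's. For the blow-up part, the scaling identity $\cE_{\beta,\gamma}[\sqrt{\alpha}\,\phi]=\alpha\,\cE_{\alpha\beta,\alpha\gamma}[\phi]$ produces smooth, compactly supported data of negative energy, and the virial/Glassey argument (with the cancellation of the magnetic terms you anticipate) is exactly \cite[Prop.~3.1, Thm.~3.1]{BeBoSa95}, which the paper simply cites. For the first part, your coercivity bound $\cE_{\beta,\gamma}[\psi]\ge \bigl(\gamma_{\ast}(\beta M)/M-\gamma\bigr)\int_{\R^2}|\psi|^4$ and the recovery of $\|\nabla\psi\|_{L^2}$ from the covariant kinetic energy via the diamagnetic and Hardy-type inequalities are precisely the estimates the paper uses (the latter is quoted as \cite[Lem.~3.3]{AtLuThi24}).

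The genuine gap is how you convert that bound into global existence. You invoke ``the local theory for \eqref{eq:CSNLS}'' with conservation of mass and energy and a blow-up alternative \emph{at $H^1$ regularity}, but no such theory is established or cited: the theorem's solutions are only weak solutions in $L^{\infty}(\R_+,H^1(\R^2))\cap C(\R_+,L^2(\R^2))$, for which uniqueness, conservation laws and a continuation criterion are not known, and the only local theory the paper relies on (\cite[Thm.~2.1]{BeBoSa95}) requires $H^2$ data, which $\psi_0\in H^1$ need not be. Because the nonlinearity $\beta\,(\bA\ast[\textbf{j}_{\beta}[\psi]])\,\psi$ is non-local and of derivative type, an $H^1$ well-posedness theory with a blow-up alternative is itself a substantial result (cf.\ \cite{LiSmTa14,Lim18}), not a routine adaptation of NLS arguments. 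The paper avoids this entirely by \emph{constructing} the global solution: it mollifies the data by $(1-\varepsilon^{1/4}\Delta)^{-1}$, regularizes the equation by adding $\ii\varepsilon\Delta^2\partial_t\psi^{\varepsilon}$ (for which global $H^2$ solutions exist), runs your coercivity argument uniformly in $\varepsilon$ --- which requires, in addition, the continuity of $\gamma_{\ast}$ and the monotonicity of $\theta\mapsto\gamma_{\ast}(\theta)/\theta$, since the regularized flow conserves $\|\psi^{\varepsilon}\|_{L^2}^2+\varepsilon\|\Delta\psi^{\varepsilon}\|_{L^2}^2$ rather than the mass --- then bounds $\partial_t\psi^{\varepsilon}$ in $L^2_{\loc}(\R_+,H^{-1}(\R^2))$ and passes to the limit by Lions' compactness theorem, using Lemma \ref{lem:localconvergauge} and a far-field decay estimate to handle convergence of the non-local terms $\bA[|\psi^{\varepsilon}|^2]$ and $(\bA\ast[\bJ[\psi^{\varepsilon}]])\psi^{\varepsilon}$. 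This construction and limit procedure, absent from your proposal, is the bulk of the paper's proof; without it (or an explicit appeal to, and verification of, an $H^1$ local well-posedness theorem with conserved quantities and a continuation criterion), the first assertion does not follow from the a priori bound alone.
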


\begin{remark}
    One of the strange properties of \eqref{eq:conditionglobalNLS} is the that if $\beta >0,$ $ \|\psi_0\|^2_{L^2} \geq \frac{2}{\beta},$ and $\gamma < 2 \pi \beta$, then there exists always a global solution for \eqref{eq:CSNLS} with the initial data $\psi_0$. Here, we are using $\gamma_{\ast}(\theta)= 2\pi \theta$ if $\theta \geq 2$; see \cite[Thm. 2]{AtLuThi24}. This is unlike the classical nonlinear Schr\"odinger equation, where the mass of the initial data should be small to have a global solution; see \cite{Weinstein-83}. 
\end{remark}
\begin{remark}
    If $\beta >0$ and $\gamma > 2 \pi \beta$, then for having a global solution for \eqref{eq:CSNLS} with the initial data $\psi_0$, it is required to have the mass condition $\|\psi_0\|^2_{L^2}< \frac{2}{\beta}$. Note that as $\beta \to \infty$ we get $\|\psi_0\|_{L ^2} \to 0.$ Hence, the smallness of $\|\psi_0\|_{L ^2}$ depends on $\beta$.
 \end{remark}

To mention some previous works on the global existence, the case of $\beta =0$ and $\gamma = 1$, the classical nonlinear Schr\"odinger equation, has been studied in the fundamental work of Weinstein \cite{Weinstein-83}. He proved in \cite[Thm. A,Thm. 4.1]{Weinstein-83} that a global solution to \eqref{eq:CSNLS} with the initial data $\psi_0 \in H^1(\R^2)$ exists if 
\begin{align*}
\|\psi_0\|_{L^2}^2 < \gamma_{\ast}(0), 
\end{align*}
and this bound is sharp, where $\gamma_{\ast}(0)$ is the Ladyzhenskaya--Gagliardo--Nirenberg interpolation constant. The global well-posedness for the case of general $\beta $ has been first studied in \cite{BeBoSa95}. The authors considered the lower-bound $\gamma_{\ast}(0)$ for $\gamma_{\ast}\left(\beta \|\psi_0\|_{L^2}^2 \right)$ in \eqref{eq:conditionglobalNLS}. Afterwards, there were several attempts to sharpen this result. The case of radially symmetric solutions and their global behavior was studied in \cite{ByHuSe12}. In \cite{OhPu15}, the global well-posedness has been proven with initial data which has small $H^2$ and a special weighted Sobolev norm.
In \cite{LiSm16,GoZh21}, the global well-posedness for the equivariant case has been studied. 

However, the literature is lacking the sharp condition for the global well-posedness of solutions to \eqref{eq:CSNLS}. To derive the sharp constant, we use that the interpolation constant $\gamma_{\ast}$ is continuous, the function $\frac{\gamma_{\ast}(\theta)}{\theta}$ is decreasing for $\theta>0,$ and a
 well-known argument in \cite{Weinstein-83} derives a uniform bound on the $H^1$-norm of an approximating solutions, constructed in  \cite{BeBoSa95}, for \eqref{eq:CSNLS}. Then, by a convergence argument, we obtain a global solution. We remark that there is a subtle difference between the way we pass to the limit in compare to \cite{BeBoSa95}, namely we do not use any estimates on the time derivative of the gauge term $\bA[|\psi|^2]$. Instead, we use a local convergence property for the gauge terms; see Lemma \ref{lem:localconvergauge}. Finally, the other part of the theorem follows as the solutions with negative energy blows up at finite time; see \cite[Thm 3.1]{BeBoSa95}.

In the second part, in Theorem \ref{thm:criticalconstant}, we consider the critical case that $\gamma = \frac{\gamma_{\ast}\left(\beta \|\psi_0\|_{L^2}^2 \right)}{\|\psi_0\|_{L^2}^2 }$ and $\frac{\psi_0}{\|\psi_0\|_{L^2}}$ is a minimizer for $\gamma_{\ast}\left( \beta \|\psi_0\|_{L^2}^2 \right)$. Then, $\cE_{\beta,\gamma}[\psi_0]=0$ and the following result is derived.

\begin{theorem}
\label{thm:criticalCSS}
    Let $\beta \in \R_+, \gamma \in \R $, and $\psi_0 \in H^1(\R^2) \setminus 0$ satisfy 
\begin{align*} 
   \cE_{\beta,\gamma}[\psi_0] = 0.
\end{align*}
Then, for every $T>0$, there exists a unique local solution $\psi \in  C([0,T],H^2(\R^2))$ of \eqref{eq:CSNLS} with the initial value $\psi_0$, which is of the form \begin{align}
\label{eq:criticalsolution}
    \psi(t,\cdot) = \psi_0(\cdot) \,e^{-\frac{\ii \, t \lambda_{\beta}(\psi_0)}{2} },
\end{align} 
for every $t \in [0,T]$, where
\begin{align*}
    \lambda_{\beta}(\psi_0) := \frac{1}{\|\psi_0\|^2_{L^2}} \left(\beta^2 \int_{\R^2} |\bA[|\psi_0|^2] \psi_0| ^2 - \int_{\R^2} |\nabla \psi_0|^2\right).
\end{align*}
    In particular, all the solutions of \eqref{eq:CSNLS} with the initial data $\psi_0$ are globally well-posed.
\end{theorem}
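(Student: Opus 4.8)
The plan is to turn the evolution equation into a time-independent elliptic problem through the standing-wave ansatz, and then to invoke the local wellposedness theory to obtain uniqueness and globality. Set $\omega := \lambda_\beta(\psi_0)/2$ and $\psi(t,\cdot) := \psi_0\,e^{-\ii\omega t}$. The first point is that the nonlinear coefficients are insensitive to the phase: since $|\psi(t)|^2 = |\psi_0|^2$ does not depend on $t$, both the self-generated gauge $\bA[|\psi(t)|^2]=\bA[|\psi_0|^2]$ and the current $\textbf{j}_\beta[\psi(t)]=\textbf{j}_\beta[\psi_0]$ are time-independent, so each term of $\tfrac12\,\tfrac{\partial}{\partial\psi^*}\cE_{\beta,\gamma}[\psi(t)]$ carries exactly one factor $e^{-\ii\omega t}$. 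Consequently the ansatz solves \eqref{eq:CSNLS} if and only if the profile $\psi_0$ satisfies the static equation
\begin{equation*}
\omega\,\psi_0 = -\tfrac12\bigl(\nabla + \ii\beta\bA[|\psi_0|^2]\bigr)^{2}\psi_0 - \beta\,\bigl(\bA\ast[\textbf{j}_\beta[\psi_0]]\bigr)\psi_0 - \gamma|\psi_0|^2\psi_0 =: G[\psi_0].
\end{equation*}

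The heart of the matter is to extract this pointwise elliptic identity from the single scalar hypothesis $\cE_{\beta,\gamma}[\psi_0]=0$. Here I would use the variational meaning of the critical threshold set up before the statement: at critical mass and zero energy, $\psi_0/\|\psi_0\|_{L^2}$ is a minimizer for $\gamma_\ast(\beta\|\psi_0\|_{L^2}^2)$, equivalently $\psi_0$ minimizes $\cE_{\beta,\gamma}$ on the sphere $\{\|\phi\|_{L^2}=\|\psi_0\|_{L^2}\}$ (on which, by the definition of $\gamma_\ast$ and the rescaling relating the mass to $\beta$, one has $\cE_{\beta,\gamma}\ge 0$, the value $0$ being attained exactly by minimizers). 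The Lagrange multiplier rule then supplies a real $\mu$ with $\tfrac{\partial}{\partial\psi^*}\cE_{\beta,\gamma}[\psi_0]=\mu\psi_0$, i.e.\ $G[\psi_0]=\tfrac\mu2\psi_0$. To pin down $\mu$, pair this identity with $\psi_0$ in $L^2$: integrating by parts in the covariant Laplacian to produce $\int_{\R^2}|(\nabla+\ii\beta\bA[|\psi_0|^2])\psi_0|^2$, using the kernel antisymmetry $\int_{\R^2}(\bA\ast[\textbf{j}_\beta[\psi_0]])|\psi_0|^2=-\int_{\R^2}\textbf{j}_\beta[\psi_0]\cdot\bA[|\psi_0|^2]$, and finally substituting the constraint $\cE_{\beta,\gamma}[\psi_0]=0$, all the quartic terms and the cross terms involving $\bA[|\psi_0|^2]$ and $\bJ[\psi_0]$ cancel, leaving $\langle\psi_0,G[\psi_0]\rangle=\tfrac12\bigl(\beta^2\int_{\R^2}|\bA[|\psi_0|^2]\psi_0|^2-\int_{\R^2}|\nabla\psi_0|^2\bigr)$. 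This forces $\mu=\lambda_\beta(\psi_0)$, hence $\omega=\lambda_\beta(\psi_0)/2$, exactly the value in \eqref{eq:criticalsolution}.

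With the static equation established I would bootstrap the regularity. Rewriting it as $-\tfrac12\Delta\psi_0=\omega\psi_0+(\text{lower order})$, the lower-order contributions $\bA[|\psi_0|^2]\cdot\nabla\psi_0$, $|\bA[|\psi_0|^2]|^2\psi_0$, $(\bA\ast[\textbf{j}_\beta[\psi_0]])\psi_0$ and $|\psi_0|^2\psi_0$ all lie in $L^2(\R^2)$ by the Sobolev embeddings $H^1\hookrightarrow L^p$ ($p<\infty$) together with the mapping properties of the singular operators $\bA[\cdot]$ and $\bA\ast[\cdot]$; elliptic regularity then yields $\psi_0\in H^2(\R^2)$ (a further bootstrap giving more). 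Therefore $\psi(t,\cdot)=\psi_0\,e^{-\ii\omega t}$ belongs to $C(\R,H^2(\R^2))$, and by the computation above it solves \eqref{eq:CSNLS} with datum $\psi_0$ on every interval $[0,T]$.

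Finally, uniqueness and global wellposedness follow by feeding $\psi_0\in H^2$ into the local wellposedness theory for \eqref{eq:CSNLS} in $H^2$: it provides, for this datum, a unique maximal solution in $C([0,T_{\max}),H^2(\R^2))$, so the globally defined standing wave just constructed must coincide with it, which forces $T_{\max}=+\infty$ and shows that every solution with initial data $\psi_0$ equals this standing wave. I expect the genuine difficulty to lie in the second step, namely promoting the scalar condition $\cE_{\beta,\gamma}[\psi_0]=0$ to the full pointwise equation $G[\psi_0]=\omega\psi_0$: this relies on the sharp interpolation inequality forcing $\psi_0$ to be a constrained minimizer (so that an Euler--Lagrange equation is available at all) and on the delicate cancellation, via integration by parts and the antisymmetry of the Chern--Simons kernel, that identifies the multiplier as precisely $\lambda_\beta(\psi_0)$. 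The elliptic bootstrap and the appeal to $H^2$ wellposedness are comparatively routine.
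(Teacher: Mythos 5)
Your proposal is correct, and its variational core coincides with the paper's: both arguments hinge on recognizing $\psi_0/\|\psi_0\|_{L^2}$ as a constrained minimizer for $\gamma_\ast(\beta\|\psi_0\|_{L^2}^2)$ (like the paper, you implicitly read the hypothesis $\cE_{\beta,\gamma}[\psi_0]=0$ together with the criticality of $\gamma$ set up before the statement; zero energy alone only yields $\gamma\|\psi_0\|_{L^2}^2\ge\gamma_\ast(\beta\|\psi_0\|_{L^2}^2)$ by the definition of $\gamma_\ast$), on the resulting Euler--Lagrange equation, on elliptic regularity giving $\psi_0\in H^2(\R^2)$ (the paper's Lemma \ref{lem:H2boundminimizer}), and on the $H^2$ local theory of \cite{BeBoSa95}. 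Where you genuinely diverge is the dynamical step. You plug in the standing-wave ansatz, verify it solves \eqref{eq:CSNLS}, and let uniqueness identify every solution with it; this obliges you to compute the Lagrange multiplier by hand, which you do correctly by pairing the Euler--Lagrange equation with $\psi_0$, using the kernel antisymmetry $\int_{\R^2}(\bA\ast[\bF])\,g=-\int_{\R^2}\bF\cdot\bA[g]$ and the constraint $\cE_{\beta,\gamma}[\psi_0]=0$, obtaining $\mu=\lambda_\beta(\psi_0)$ (the paper instead reads this multiplier off from \cite[Lem.~3.10]{AtLuThi24}). The paper never verifies an ansatz: it takes the unique local solution $\psi$ furnished by \cite[Thm.~2.1]{BeBoSa95}, uses conservation of mass and energy (Proposition \ref{prop:conservationlaws}) to conclude that $\psi(t,\cdot)/\|\psi_0\|_{L^2}$ is \emph{again} a minimizer for every $t\in[0,T]$, hence satisfies the Euler--Lagrange equation at each time with a possibly time-dependent multiplier $\lambda(t)$, and then subtracts this from \eqref{eq:CSNLS} to collapse the PDE to the ODE $\partial_t\psi=-\tfrac{\ii}{2}\lambda(t)\psi$; since the solution of that ODE has $|\psi(t,\cdot)|=|\psi_0|$, the multiplier is forced to be the constant $\lambda_\beta(\psi_0)$. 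The trade-off is this: the paper's conservation-law route \emph{derives} the standing-wave form (including the constancy of the phase speed) for an arbitrary $H^2$ solution, with uniqueness entering only through the citation, whereas your route is the shorter, classical ansatz-plus-uniqueness argument, which leans on the uniqueness statement (and a standard continuation argument for $T_{\max}=\infty$) to cover ``all solutions''; both reliances are legitimate here, so your proof goes through.
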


\begin{remark}
  Let $\beta \in \R_+, \gamma \in \R, T>0$, and $\psi_0 \in H^1(\R^2).$ Then, if  $\int_{\R^2} |\bx|^2 |\psi_0(\bx)|^2 \, \dd \bx < \infty $ and a solution $\psi \in C([0,T]; H^2(\R^2))$ of \eqref{eq:CSNLS} with the initial data $\psi_0$ is standing wave solution, i.e., 
  \begin{align*}
      \psi(t,\cdot) = e^{\ii \lambda t} \psi_0(\cdot),
  \end{align*}
  for $ 0\leq t \leq T$ and some $\lambda \in \R$, then, by \cite[Prop. 3.1]{BeBoSa95}, we have 
\begin{align*}
 \frac{1}{2} \cE_{\beta,\gamma}[\psi(t,\cdot)] = \frac{d ^2}{d t^2} \int_{\R^2} |\bx|^2 |\psi(t,\bx)|^2 \, \dd \bx = 0,
\end{align*}
for every $t >0.$ In particular, we obtain $\cE_{\beta,\gamma}[\psi_0]=0.$ This shows that standing wave solutions have zero energy. Moreover, by Theorem \ref{thm:criticalCSS}, all the solutions to \eqref{eq:CSNLS} with the initial data which has zero energy are standing wave solutions.
  
\end{remark}

\begin{remark}
    In the case of $\beta \geq 2,$ we prove that $\lambda_{\beta}(\psi_0)=0$ in Theorem \ref{thm:criticalCSS}; see Corollary \ref{cor:staticsolutions}. Hence, the solutions of \eqref{eq:CSNLS} are static, time-independent, for $\beta \geq 2.$ However, for $\beta < \sqrt{\frac{2}{3}}$, we obtain that $\lambda_{\beta}(\psi_0)<0$ in Theorem \ref{thm:criticalCSS}; see the proof of Corollary \ref{cor:staticsolutions}. Hence, for small enough $\beta$, there exists no static solution to \eqref{eq:CSNLS}.
\end{remark}

\begin{remark}
    In Theorem \ref{thm:criticalCSS}, it is crucial to start with an initial data $\psi_0$ which has zero energy. Otherwise, by assuming only $\gamma = \frac{\gamma_{\ast}\left(\beta \|\psi_0\|_{L^2}^2 \right)}{\|\psi_0\|_{L^2}^2 }$, by using conformal transformations, we may derive solutions which blow up in finite time; see \cite{Hu9,KiKwOh22,KiKw230,KiKw231}.  
\end{remark}

 \section{Acknowledgenment}
The author thanks Douglas Lundholm and Dinh-Thi Nguyen for many intriguing discussions on the work \cite{AtLuThi24} which has motivated the current work.

\section{Preliminary results}
In this section, we bring all the preliminary results for the proof of  \Cref{thm:criticalconstant,thm:criticalCSS}.
We use the notations $\N = \{1,2,3,\ldots\}$, $\R_+ = [0,\infty)$, $\Delta$ as the local Laplacian.

Now, we bring some basic inequalities on the magnetic energy.

\begin{lemma}\label{lem:basicinequalities}
Let $f \in H^1(\R^2)$. Then, the following inequalities hold.
\begin{enumerate}[label=(\roman*)]
\item {\bf Diamagnetic inequality.} For every $\beta \in \R$,
\begin{equation}\label{eq:AF-diamagnetic}
|\nabla |f|(\bx) | \leq  \left|(\nabla + {\rm i}\beta \bA\left[|f|^2\right]) f(\bx)\right|,
\end{equation}
for a.e. $\bx \in \R^2.$
\item {\bf Hardy-type inequality.} 
\begin{equation}\label{eq:AF-H1}
\int_{\R^2} \left|\bA\left[|f|^2\right]f\right|^2
\le \CH \left(\int_{\R^2} |f|^2\right)^2 \left(\int_{\R^2} \bigl|\nabla|f|\bigr|^2\right),
\end{equation}
for a universal constant $\CH>0$.
\item {\bf Gagliardo--Nirenberg interpolation inequality.} For every $p \geq 2$, 
\begin{align}\label{eq:gagliardonirenberg}
C(p) \int_{\R^2} |f|^p \leq \left(\int_{\R^2} |f|^2\right) \left(\int_{\R^2} |\nabla f|^2\right)^{\frac{p-2}{2}},
\end{align}
where $C(p)$ is a universal constant depending on $p.$
\end{enumerate}
\end{lemma}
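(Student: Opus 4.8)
The plan is to treat the three parts separately, as each is a classical inequality whose only subtlety lies in the structure of the Chern--Simons gauge $\bA[|f|^2]$.

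For part (i) I would argue pointwise. On the set $\{f\neq 0\}$ one has $\nabla|f| = \re\big(\overline{\sign f}\,\nabla f\big)$, where $\sign f := f/|f|$, since $2|f|\,\nabla|f| = 2\re(\bar f\,\nabla f)$. Because $\bA[|f|^2]$ is real-valued and $\beta\in\R$, the gauge contribution $\re\big(\overline{\sign f}\,\ii\beta\bA[|f|^2]f\big) = \re\big(\ii\beta\,\bA[|f|^2]\,|f|\big)$ is the real part of a purely imaginary quantity and hence vanishes. Therefore $\nabla|f| = \re\big(\overline{\sign f}\,(\nabla+\ii\beta\bA[|f|^2])f\big)$, and taking moduli together with $|\sign f|=1$ yields \eqref{eq:AF-diamagnetic} on $\{f\neq 0\}$. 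On $\{f=0\}$ the left-hand side vanishes almost everywhere, so the estimate holds trivially.

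For part (iii), the claim is exactly the two-dimensional Gagliardo--Nirenberg inequality $\|f\|_{L^p}\le C\,\|f\|_{L^2}^{2/p}\,\|\nabla f\|_{L^2}^{1-2/p}$ raised to the power $p$; the case $p=2$ is immediate and for $p>2$ it is the standard statement, which I would simply invoke.

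The substantive work is part (ii). Since every term depends on $f$ only through $|f|$, I first reduce to $f=g\ge 0$. As the kernel defining $\bA$ has modulus $|\bx-\by|^{-1}$, one has the pointwise bound $|\bA[g^2](\bx)|\le \int_{\R^2}\frac{g^2(\by)}{|\bx-\by|}\,\dd\by =: h(\bx)$, i.e.\ $h$ is the Riesz potential $I_1[g^2]$ up to a constant, so it suffices to bound $\int_{\R^2}g^2h^2$. Applying Hölder with the conjugate pair $(2,2)$ gives $\int g^2h^2\le \|g\|_{L^4}^2\,\|h\|_{L^4}^2$. The Hardy--Littlewood--Sobolev inequality in $\R^2$, with $\tfrac14=\tfrac34-\tfrac12$, yields $\|h\|_{L^4}=\|I_1[g^2]\|_{L^4}\lesssim \|g^2\|_{L^{4/3}}=\|g\|_{L^{8/3}}^2$. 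Finally the Gagliardo--Nirenberg inequality of part (iii) gives $\|g\|_{L^4}^2\lesssim \|g\|_{L^2}\,\|\nabla g\|_{L^2}$ and $\|g\|_{L^{8/3}}^4\lesssim \|g\|_{L^2}^3\,\|\nabla g\|_{L^2}$, whose product is $\|g\|_{L^2}^4\,\|\nabla g\|_{L^2}^2$, which is precisely the claimed bound $\CH\,\big(\int g^2\big)^2\big(\int|\nabla g|^2\big)$.

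I expect the main obstacle to be the exponent bookkeeping in (ii): one must verify that the powers of $\|g\|_{L^2}$ and $\|\nabla g\|_{L^2}$ generated by Hölder, Hardy--Littlewood--Sobolev, and Gagliardo--Nirenberg assemble into exactly $\big(\int g^2\big)^2\big(\int|\nabla g|^2\big)$, and that the intermediate exponents ($4/3$ for HLS, $8/3$ and $4$ for the interpolation) lie in the admissible ranges. The consistency is in fact forced: the target inequality is invariant both under the scaling $g\mapsto g(\lambda\,\cdot)$ and under $g\mapsto cg$, so the two homogeneity degrees pin the outcome down, and the only remaining care concerns tracking the universal constants.
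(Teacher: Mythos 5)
Your proof is correct, but it takes a genuinely different route from the paper, which does not prove the lemma at all: it simply cites \cite[Thm.~7.21]{LieLos01} for \eqref{eq:AF-diamagnetic}, \cite[Lem.~3.4]{LunRou15} for \eqref{eq:AF-H1}, and \cite{Ga59,Ni59} for \eqref{eq:gagliardonirenberg}. Your argument for (i) is essentially the standard Lieb--Loss proof (the identity $\nabla|f|=\re(\overline{\sign f}\,\nabla f)$ plus the observation that the gauge term is purely imaginary, and $\nabla|f|=0$ a.e.\ on $\{f=0\}$), so there the difference is only that you reproduce the cited proof. The substantive divergence is in (ii): Lundholm--Rougerie derive \eqref{eq:AF-H1} by symmetrizing over pairs and triples of variables and invoking many-particle Hardy inequalities (cf.\ \cite{HofLapTid-08}), which yields the explicit constant $\CH=\tfrac{3}{2}$, whereas your chain H\"older $\to$ Hardy--Littlewood--Sobolev $\to$ Gagliardo--Nirenberg (with the exponents $4$, $4/3$, $8/3$, all correctly matched, and the powers assembling to $\|g\|_{L^2}^4\|\nabla g\|_{L^2}^2$ exactly as you check) is a more standard harmonic-analysis argument that is self-contained but produces only an unspecified universal constant. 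For the lemma as stated that is enough; note, however, that the paper later uses the value $\CH=\tfrac32$ quantitatively (the bound $\tfrac32 C_1^3$ and the factor $\sqrt{3/2}\,\beta$ in the proof of Theorem \ref{thm:criticalconstant}, and the threshold $\beta\|\psi_0\|_{L^2}^2<\sqrt{1/\CH}$ in Corollary \ref{cor:staticsolutions}), so replacing the citation by your proof would force one either to track your constants explicitly or to restate those later bounds in terms of a generic $\CH$.
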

\begin{proof}
     We refer to \cite[Thm.~7.21]{LieLos01} for \eqref{eq:AF-diamagnetic}, \cite[Lem.~3.4]{LunRou15} for \eqref{eq:AF-H1}, and \cite{Ga59,Ni59} for \eqref{eq:gagliardonirenberg}.
\end{proof}
 We remark that the constant in \eqref{eq:AF-H1} was given by $\CH=\frac{3}{2}$ in \cite[Lemma~3.4]{LunRou15}. However, it might not be optimal as mentioned in \cite[Remark~3.7(ii)]{HofLapTid-08}.
 
Next, we bring the following conservation of mass and energy. 

\begin{proposition}
\label{prop:conservationlaws}
Let $(\lambda,\beta,\gamma) \in \R_+ \times \R^2$, $T>0,$ and $\psi_0 \in H^2(\R^2)$. Assume that $\psi \in C([0,T],H^2(\R^2))$ is a weak solution of 
\begin{align*}
\ii \partial_t \psi + \ii\lambda \Delta^2 \partial_t \psi = \frac{1}{2}  \frac{\partial}{\partial \psi^*} \cE_{\beta,\gamma}[\psi], 
\end{align*}
in $[0,T] \times \R^2$ with initial data $\psi_0.$ Then, for every $0\leq t \leq T$, we have
\begin{align}
\label{eq:conservationofmass}
   \int_{\R^2} |\psi(t,\cdot)|^2 + \lambda \int_{\R^2} |\Delta \psi(t,\cdot)|^2 &= \int_{\R^2} |\psi_0|^2 + \lambda \int_{\R^2} |\Delta \psi_0|^2,
\end{align}
and 
\begin{align}
\label{eq:conservationofenergy}
      \cE_{\beta,\gamma}[\psi(t,\cdot)] &= \cE_{\beta,\gamma}[\psi_0].
\end{align}
\begin{proof}
    The proof of \eqref{eq:conservationofenergy} follows the same argument as \cite[Prop 2.1]{BeBoSa95}, so we avoid repetition. Finally, to demonstrate \eqref{eq:conservationofmass}, we can multiply the equation for $\psi$ with $\psi^*$ and integrate the imaginary part, as mentioned in \cite[Page 245]{BeBoSa95}.
\end{proof}

\end{proposition}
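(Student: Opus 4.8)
The plan is to derive both identities by testing the regularized equation against suitable multipliers and exploiting its variational (Hamiltonian) structure, so that the right-hand side $R[\psi] := \frac{1}{2}\frac{\partial}{\partial\psi^*}\cE_{\beta,\gamma}[\psi]$ --- whose explicit form is recorded in \eqref{eq:CSNLS} --- pairs in a controlled way with each multiplier. First I would record a regularity remark: the equation determines $\partial_t\psi$ in terms of $\psi$ via $\partial_t\psi = -\ii(\mathrm{Id}+\lambda\Delta^2)^{-1}R[\psi]$, and the smoothing operator $(\mathrm{Id}+\lambda\Delta^2)^{-1}$ (which gains four derivatives when $\lambda>0$) together with the $H^2$-regularity of $\psi$ and the mapping properties of $\bA$ from \Cref{lem:basicinequalities} guarantees that $R[\psi]\in L^2(\R^2)$ and that $\partial_t\psi$ is regular enough to justify every integration by parts below, with no boundary contributions at infinity (by density of $C_c^\infty(\R^2)$). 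Since $\psi$ is only a weak solution, the pairings are first understood in the weak formulation and then upgraded to genuine integrals using this regularity.

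For the mass identity \eqref{eq:conservationofmass} I would test the equation with $\overline{\psi}$, integrate over $\R^2$, and take the imaginary part. Using $\im(\ii w)=\re(w)$ together with $\int_{\R^2}\overline{\psi}\,\Delta^2\partial_t\psi=\int_{\R^2}\Delta\overline{\psi}\,\Delta\partial_t\psi$, the left-hand side becomes exactly $\tfrac12\tfrac{d}{dt}\big(\int_{\R^2}|\psi|^2+\lambda\int_{\R^2}|\Delta\psi|^2\big)$. It then remains to show that $\im\int_{\R^2}\overline{\psi}\,R[\psi]=0$, which I would verify term by term: integrating the covariant Laplacian by parts gives $-\tfrac12\int_{\R^2}|(\nabla+\ii\beta\bA[|\psi|^2])\psi|^2$, which is real; the self-interaction contributes $-\gamma\int_{\R^2}|\psi|^4$, also real; and the Chern--Simons term equals $-\beta\int_{\R^2}(\bA\ast[\textbf{j}_{\beta}[\psi]])\,|\psi|^2$, which is real because $\textbf{j}_{\beta}[\psi]$ is a real vector field and $\bA\ast[\,\cdot\,]$ produces a real scalar. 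Thus the time derivative vanishes and \eqref{eq:conservationofmass} follows.

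For the energy identity \eqref{eq:conservationofenergy} I would instead test the equation with $\overline{\partial_t\psi}$ and take the real part. The left-hand side becomes $\re\big(\ii\int_{\R^2}|\partial_t\psi|^2+\ii\lambda\int_{\R^2}|\Delta\partial_t\psi|^2\big)=0$, since both integrals are real. For the right-hand side, the variational structure gives the chain rule $\tfrac{d}{dt}\cE_{\beta,\gamma}[\psi]=2\re\int_{\R^2}\tfrac{\partial}{\partial\psi^*}\cE_{\beta,\gamma}[\psi]\,\overline{\partial_t\psi}=4\,\re\int_{\R^2}R[\psi]\,\overline{\partial_t\psi}$, so that $\re\int_{\R^2}R[\psi]\,\overline{\partial_t\psi}=\tfrac14\tfrac{d}{dt}\cE_{\beta,\gamma}[\psi]$. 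Combining the two computations yields $\tfrac{d}{dt}\cE_{\beta,\gamma}[\psi]=0$, which is \eqref{eq:conservationofenergy}.

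The main obstacle is the rigorous justification of this chain rule for the energy in the presence of the nonlocal, self-generated gauge field $\bA[|\psi|^2]$: when differentiating $\int_{\R^2}|(\nabla+\ii\beta\bA[|\psi|^2])\psi|^2$ in $t$, the variation of $\bA[|\psi|^2]$ itself contributes, and it is precisely the antisymmetry of the kernel $(\bx-\by)^{\perp}/|\bx-\by|^2$ that recombines these cross terms into the current contribution $-\beta(\bA\ast[\textbf{j}_{\beta}[\psi]])\psi$ appearing in $R[\psi]$. I would settle this by showing that $\cE_{\beta,\gamma}$ is Fr\'echet differentiable with the stated $L^2$-gradient (using the boundedness and continuity of $f\mapsto\bA[f]$ on the relevant spaces, together with \eqref{eq:AF-H1}) and that $t\mapsto\cE_{\beta,\gamma}[\psi(t,\cdot)]$ is $C^1$, which follows from $\psi\in C([0,T],H^2(\R^2))$ and the regularity of $\partial_t\psi$ noted above. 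This is exactly the computation carried out in \cite[Prop.~2.1]{BeBoSa95}, so in the write-up I would either invoke it or reproduce the kernel-antisymmetry cancellation directly.
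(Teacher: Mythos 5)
Your proof is correct and takes essentially the same approach as the paper: the paper's proof consists precisely of multiplying the equation by $\psi^*$ and integrating the imaginary part for \eqref{eq:conservationofmass}, and of invoking \cite[Prop.~2.1]{BeBoSa95} for \eqref{eq:conservationofenergy}, which is exactly the chain-rule/kernel-antisymmetry computation you outline and then defer to that reference. (One harmless slip: pairing $-\tfrac12(\nabla+\ii\beta\bA[|\psi|^2])\cdot(\nabla+\ii\beta\bA[|\psi|^2])\psi$ with $\overline{\psi}$ and integrating by parts gives $+\tfrac12\int_{\R^2}|(\nabla+\ii\beta\bA[|\psi|^2])\psi|^2$ rather than the minus sign you wrote, but since only the realness of this term is used in taking the imaginary part, the argument is unaffected.)
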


Now, we demonstrate some well-known properties of the operators $(1-\lambda \Delta)^{-1}, (1+\lambda \Delta^2) ^{-1}$ for $\lambda \in \R_+$.
\begin{lemma}
\label{lem:basicfouriertypeestimate}
    Let $f \in H^1(\R^2)$ and $\lambda  \in \R_+$. Then, 
    \begin{enumerate}[label=\text{(\roman*)}]
        \item 
       $$ \|(1-\lambda \Delta)^{-1} f\|_{L^2(\R^2)} \leq \|f\|_{L^2(\R^2)},$$
        \item $$\| \nabla (1-\lambda \Delta)^{-1} f\|_{L^2(\R^2)}
\leq \|\nabla f\|_{L^2(\R^2)},$$ 
\item   $$\|\Delta (1+\lambda \Delta^2)^{-1} f\|_{H^{-1}(\R^2)} \leq  \|\nabla f\|_{L^2(\R^2)}.$$
        \end{enumerate}

\end{lemma}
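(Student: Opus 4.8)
The plan is to reduce all three estimates to pointwise bounds on the associated Fourier multipliers and then invoke Plancherel's theorem. Writing $\hf$ for the Fourier transform of $f$, the operator $\Delta$ acts as multiplication by $-|\xi|^2$, so $(1-\lambda\Delta)^{-1}$ and $(1+\lambda\Delta^2)^{-1}$ are the Fourier multipliers with symbols $m_1(\xi)=(1+\lambda|\xi|^2)^{-1}$ and $m_2(\xi)=(1+\lambda|\xi|^4)^{-1}$, respectively. Since $\lambda\in\R_+$, both symbols satisfy $0\le m_1(\xi),m_2(\xi)\le 1$ for every $\xi\in\R^2$, and this contraction property is essentially the only structural fact the argument needs.

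For (i), Plancherel gives $\norm{(1-\lambda\Delta)^{-1}f}_{L^2(\R^2)}^2=\int_{\R^2} m_1(\xi)^2\,|\hf(\xi)|^2\,\dd\xi$, and the pointwise bound $m_1(\xi)^2\le 1$ immediately yields the claim. For (ii), I would note that $\nabla$ corresponds to multiplication by $\ii\xi$ and commutes with the multiplier, so $\norm{\nabla(1-\lambda\Delta)^{-1}f}_{L^2(\R^2)}^2=\int_{\R^2}|\xi|^2 m_1(\xi)^2\,|\hf(\xi)|^2\,\dd\xi\le\int_{\R^2}|\xi|^2|\hf(\xi)|^2\,\dd\xi=\norm{\nabla f}_{L^2(\R^2)}^2$, once more using $m_1(\xi)^2\le 1$.

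For (iii), I would first recall the Fourier description of the negative-order norm, $\norm{g}_{H^{-1}(\R^2)}^2=\int_{\R^2}(1+|\xi|^2)^{-1}|\hat g(\xi)|^2\,\dd\xi$ (the homogeneous convention $\norm{g}_{\dot H^{-1}}^2=\int |\xi|^{-2}|\hat g|^2$ is handled identically). The operator $\Delta(1+\lambda\Delta^2)^{-1}$ has symbol $-|\xi|^2 m_2(\xi)$, whence
\[
\norm{\Delta(1+\lambda\Delta^2)^{-1}f}_{H^{-1}(\R^2)}^2=\int_{\R^2}\frac{|\xi|^4\,m_2(\xi)^2}{1+|\xi|^2}\,|\hf(\xi)|^2\,\dd\xi.
\]
To conclude it suffices to verify the pointwise inequality $\frac{|\xi|^4 m_2(\xi)^2}{1+|\xi|^2}\le|\xi|^2$, i.e. $\frac{|\xi|^2}{1+|\xi|^2}\,m_2(\xi)^2\le 1$, which follows at once from $\frac{|\xi|^2}{1+|\xi|^2}\le 1$ and $m_2(\xi)^2\le 1$. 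Integrating against $|\hf|^2$ and recognizing the right-hand side as $\norm{\nabla f}_{L^2(\R^2)}^2$ finishes the proof.

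There is no genuine obstacle here: each bound is an elementary consequence of the contraction property of the resolvent-type multipliers on the Fourier side. The only point requiring a modicum of care is (iii), where one must track the negative-order weight $(1+|\xi|^2)^{-1}$ and confirm that it is absorbed by the gain $|\xi|^4$ in the numerator relative to the target weight $|\xi|^2$; the verification collapses to the trivial inequality $|\xi|^2\le 1+|\xi|^2$. I would also double-check the borderline case $\lambda=0$, where $m_2\equiv 1$ and (iii) reduces to the standard boundedness $\Delta:H^1(\R^2)\to H^{-1}(\R^2)$, which the same computation covers.
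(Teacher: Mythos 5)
Your proposal is correct and takes essentially the same approach as the paper: all three estimates are Plancherel/Fourier-multiplier contraction bounds, exactly as in the paper's proof. The only cosmetic difference is in (iii), where the paper factors $\Delta (1+\lambda \Delta^2)^{-1} f = \div\left(\nabla (1+\lambda \Delta^2)^{-1} f\right)$ and uses $\|\div \bF\|_{H^{-1}(\R^2)} \leq \|\bF\|_{L^2(\R^2)}$ followed by the multiplier bound $\|\nabla (1+\lambda \Delta^2)^{-1} f\|_{L^2(\R^2)} \leq \|\nabla f\|_{L^2(\R^2)}$, whereas you compute the symbol $-|\xi|^2(1+\lambda|\xi|^4)^{-1}$ of the composite operator and verify the pointwise inequality directly---the same calculation, organized differently.
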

\begin{proof}
The first two inequalities are simple applications of the Fourier transform.
To prove the last inequality, we note that
\begin{align*}
    \|\Delta (1+\lambda \Delta^2)^{-1} f\|_{H^{-1}(\R^2)} &= \|\div \left (\nabla (1+\lambda \Delta^2)^{-1} f \right)\|_{H^{-1}(\R^2)} \\& \leq  \|\nabla (1+\lambda \Delta^2)^{-1} f \|_{L^2(\R^2)}\\& \leq \|\nabla f\|_{L^2(\R^2)}, 
\end{align*}
where the last inequality is a simple application of the Fourier transform.
    
\end{proof}

The next two propositions drive the $L^{\infty}$ and $L^p$ estimates on the operators $\bA , \bA \ast$.

\begin{proposition}
\label{prop:LinfinityestimateonA}
    Let $f \in L^1 \cap L^p(\R^2)$ for some $p>2.$ Then, $\bA[f] \in L^{\infty}(\R^2)$ and
    \begin{align*}
        \|\bA[f]\|_{L^{\infty}} \leq \|f\|_{L^1} +\left(\frac{2 \pi}{2-p'} \right)^{\frac{1}{p'}}\|f\|_{L^p},
    \end{align*}
where $p':= \frac{p}{p-1}$.

\end{proposition}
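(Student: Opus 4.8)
The plan is to reduce everything to a pointwise estimate on the convolution kernel followed by a near-field/far-field splitting. The kernel defining $\bA$ is $K(\bz) = \bz^{\perp}/|\bz|^2$, and since $|\bz^{\perp}| = |\bz|$, it satisfies the pointwise bound $|K(\bz)| = 1/|\bz|$. Hence, for a.e.\ $\bx \in \R^2$,
\begin{align*}
|\bA[f](\bx)| \leq \int_{\R^2} \frac{|f(\by)|}{|\bx - \by|}\, \dd \by.
\end{align*}
I would first note that this bound already shows the defining principal-value integral converges absolutely, so no cancellation is invoked and it suffices to bound the right-hand side uniformly in $\bx$.

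Next I would split the integration domain according to whether $|\bx - \by| \leq 1$ or $|\bx - \by| > 1$. On the far region $\{|\bx-\by| > 1\}$ the weight satisfies $1/|\bx-\by| \leq 1$, so that contribution is bounded simply by $\int_{\R^2} |f(\by)|\, \dd \by = \norm{f}_{L^1}$, which accounts for the first term on the right-hand side of the claimed inequality.

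For the near region $\{|\bx-\by| \leq 1\}$ I would apply H\"older's inequality with the conjugate exponents $p$ and $p' = p/(p-1)$, giving
\begin{align*}
\int_{|\bx-\by|\leq 1} \frac{|f(\by)|}{|\bx-\by|}\, \dd \by \leq \norm{f}_{L^p}\left(\int_{|\bz|\leq 1} \frac{\dd \bz}{|\bz|^{p'}}\right)^{1/p'}.
\end{align*}
The remaining radial integral is evaluated in polar coordinates as $\int_{|\bz|\leq 1} |\bz|^{-p'}\, \dd \bz = 2\pi \int_0^1 r^{1-p'}\, \dd r = \tfrac{2\pi}{2-p'}$, which is finite precisely because the hypothesis $p > 2$ forces $p' < 2$, i.e.\ $2 - p' > 0$. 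This convergence is the only point where the argument could break down, and it is exactly where the assumption $p>2$ enters, so it is the main thing to verify carefully.

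Combining the far-field and near-field estimates yields, for a.e.\ $\bx$, the bound $|\bA[f](\bx)| \leq \norm{f}_{L^1} + (2\pi/(2-p'))^{1/p'}\norm{f}_{L^p}$; in particular $\bA[f]$ is essentially bounded, and taking the supremum over $\bx$ gives the stated inequality.
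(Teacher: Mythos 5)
Your proof is correct and follows essentially the same route as the paper: the pointwise bound $|(\bx-\by)^{\perp}|/|\bx-\by|^2 = 1/|\bx-\by|$, a near/far splitting at radius $1$, the trivial $L^1$ bound on the far region, and H\"older's inequality with exponents $p,p'$ plus the explicit polar-coordinate evaluation $\int_{|\bz|\le 1}|\bz|^{-p'}\,\dd\bz = 2\pi/(2-p')$ on the near region. Your additional remark that the integral converges absolutely (so no principal-value cancellation is needed) is a harmless clarification the paper leaves implicit.
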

\begin{proof}
 For every $\bx \in \R^2,$ we have 
\begin{align*}
   |\bA[f](\bx)| &= \biggl | \int_{\R^2} \frac{\bx - \by}{ |\bx - \by|^2} f(\by) \, \dd \by  \biggr |
   \\ &\leq  \biggl | \int_{\R^2 \setminus B(\bx,1)} \frac{\bx - \by}{ |\bx - \by|^2} f(\by)\, \dd \by  \biggr | + \biggl | \int_{ B(\bx,1)} \frac{\bx - \by}{ |\bx - \by|^2} f(\by)\, \dd \by  \biggr | 
   \\ & \leq \|f\|_{L^1} + \biggl ( \int_{B(\bx,1)} \frac{1}{|\bx-\by|^{p'}} \, \dd \by \biggr)^{\frac{1}{p'}}  \|f\|_{L^p(B(\bx,1))} \\
   & \leq \|f\|_{L^1} +\left(\frac{2 \pi}{2-p'}\right)^{\frac{1}{p'}} \|f\|_{L^p}
 \end{align*}
where we used H\"older's inequality on the one to the last inequality.

\end{proof}

\begin{proposition}
\label{prop:Youngtypeineq}
    Let $f \in L^p(\R^2)$ and $\bF \in L^p(\R^2;\C^2)$ for some $1<p<2.$ Then,  both $\bA[f]$ and $ \bA \ast [\bF]$ belong to $ L^r(\R^2)$ for $r := \frac{2p}{2-p}$ and 
    \begin{align*}
        \left \|\bA[f] \right\|_{L^r}  \leq C(r) \|f\|_{L^p},\\
        \left \| \bA \ast [\bF] \right\|_{L^r} \leq C(r) \left\|\bF \right\|_{L^p}, 
    \end{align*}
where $C(r)$ is a universal constant depending on $r.$

\end{proposition}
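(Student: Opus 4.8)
The plan is to reduce both bounds to a single classical estimate for Riesz potentials, since the only feature of the kernel that matters is its size. First I would observe that the integral kernel defining both operators is $\frac{(\bx-\by)^{\perp}}{|\bx-\by|^2}$, whose Euclidean norm equals $\frac{1}{|\bx-\by|}$ because $|\bz^{\perp}| = |\bz|$ for every $\bz \in \R^2$. Consequently, by the triangle inequality for $\bA[f]$ and by the Cauchy--Schwarz inequality (applied to the dot product against $\bF(\by) \in \C^2$, noting that the kernel is real) for $\bA \ast [\bF]$, I obtain the pointwise bounds
\begin{align*}
    |\bA[f](\bx)| &\leq \int_{\R^2} \frac{1}{|\bx-\by|}\, |f(\by)| \, \dd \by, \\
    |\bA \ast [\bF](\bx)| &\leq \int_{\R^2} \frac{1}{|\bx-\by|}\, |\bF(\by)| \, \dd \by,
\end{align*}
for a.e. $\bx \in \R^2$, where $|\bF|$ denotes the Euclidean norm of the vector field $\bF$.

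The second step is to recognize the right-hand sides as Riesz potentials of order $\alpha = 1$ in dimension $d = 2$, that is, as $I_1 g := |\cdot|^{-(d-\alpha)} \ast g = |\cdot|^{-1} \ast g$ evaluated at $g = |f|$ and $g = |\bF|$. The Hardy--Littlewood--Sobolev inequality (see, e.g., \cite{LieLos01}) then yields $\|I_1 g\|_{L^r} \leq C(r) \|g\|_{L^p}$ precisely when $\frac{1}{r} = \frac{1}{p} - \frac{\alpha}{d} = \frac{1}{p} - \frac{1}{2}$ and $1 < p < r < \infty$. Solving the exponent relation gives $\frac{1}{r} = \frac{2-p}{2p}$, hence $r = \frac{2p}{2-p}$, which matches the exponent in the statement. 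Moreover, the assumption $1 < p < 2$ is exactly the range in which this $r$ is finite and positive, and one checks that $r > p$ for all such $p$, so the admissibility conditions of the inequality are satisfied. Combining the two pointwise estimates with the Hardy--Littlewood--Sobolev bound applied to $|f|$ and to $|\bF|$ produces both claimed inequalities with the same constant $C(r)$, and in particular shows $\bA[f], \bA \ast [\bF] \in L^r(\R^2)$.

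There is essentially no hard step here: the entire content is the pointwise domination by the modulus of the kernel, after which the conclusion is a direct application of the Hardy--Littlewood--Sobolev inequality with the conjugate exponents matched. The only points deserving care are the verification that the exponent relation produces exactly $r = \frac{2p}{2-p}$ and that $1 < p < 2$ places the pair $(p,r)$ in the admissible range $1 < p < r < \infty$; both are immediate arithmetic checks.
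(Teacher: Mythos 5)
Your proof is correct and essentially the same as the paper's: both arguments dominate the kernel pointwise by $\frac{1}{|\bx-\by|}$ and then invoke the convolution inequality for this kernel, which the paper phrases as the weak-type Young inequality with $\frac{1}{|\bx|} \in L^{2,w}(\R^2)$ and you phrase as the Hardy--Littlewood--Sobolev inequality for the Riesz potential $I_1$ --- these are the same tool (both in \cite[Section~4.3]{LieLos01}), giving the identical exponent relation $r = \frac{2p}{2-p}$.
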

\begin{proof}
    Both inequalities are a simple application of the weak-type Young's inequality (see, e.g., \cite[Section~4.3]{LieLos01}), by noting that 
\begin{align*}
     & \|\bA [f]\|_{L^r} \leq \left \| \frac{1}{|\bx|} \ast |f| \right\|_{L^r} \leq C  \left\|\frac{1}{|\bx|} \right\|_{L^{2,w}} \|f\|_{L^p} \\
    & \|\bA \ast [\bF]\|_{L^r} \leq \left \| \frac{1}{|\bx|} \ast |\bF| \right\|_{L^r} \leq C  \left\|\frac{1}{|\bx|} \right\|_{L^{2,w}} \|\bF\|_{L^p},\\
\end{align*}
    where $C$ is a universal constant depending on $r.$
\end{proof}
Finally, we need the following result on the local convergence of gauges.
\begin{lemma}
\label{lem:localconvergauge}
   Let $p>2, T>0$, and $f_n \in L^1 \cap L^p_{\loc}([0,T] \times \R^2)$, $\bF_n \in L^1 \cap L^p_{\loc}([0,T] \times \R^2;\C^2)$ be sequences which converge to $f, \bF$ in  $L^p_{\loc}([0,T] \times \R^2)$, $L^p_{loc}([0,T] \times \R^2;\C^2)$, respectively. Moreover, assume that  
   $\sup_{0 \leq t \leq T}\|f_n(t,\cdot) \|_{L^1(\R^2)}+ \|\bF_n(t,\cdot)\|_{L^1(\R^2)}$ is uniformly bounded. Then, for every $0<q<p $,  $\bA[f_n]$, $\bA \ast [\bF_n]$ converge to $\bA[f]$, $\bA \ast [\bF]$ in $L^q_{\loc}([0,T] \times \R^2;\C^2)$, $L^q_{\loc}([0,T] \times \R^2)$, respectively.
\end{lemma}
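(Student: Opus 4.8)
The plan is to reduce, by linearity, to showing that $\bA[g_n]\to 0$ in $L^q_{\loc}$ for $g_n := f_n - f$, together with the analogous statement for $\bA\ast$. A crucial preliminary observation is that the limit $f$ itself enjoys a uniform spatial $L^1$-bound: extracting a subsequence along which $f_n\to f$ almost everywhere on $[0,T]\times\R^2$ and applying Fatou's lemma fibrewise in $t$ yields $\|f(t,\cdot)\|_{L^1(\R^2)}\le M:=\sup_n\sup_t\|f_n(t,\cdot)\|_{L^1(\R^2)}$ for a.e.\ $t$, so that $\sup_t\|g_n(t,\cdot)\|_{L^1(\R^2)}\le 2M$ uniformly in $n$. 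Throughout I use the pointwise domination $|\bA[g](\bx)|\le\int_{\R^2}|\bx-\by|^{-1}|g(\by)|\,\dd\by$, which reduces matters to the positive kernel $|\cdot|^{-1}$; since $\bA\ast[\cdot]$ is dominated identically with $|\bF|$ the Euclidean norm, the two assertions follow from verbatim the same computation.

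Fix a spatial ball $\Omega=B(0,R)$ and $0<q<p$; I will show $\bA[g_n]\to 0$ in $L^q([0,T]\times\Omega)$. The key device is to split the kernel at a radius $\rho>0$ into a near part (integration over $B(\bx,\rho)$) and a far part (its complement). For the far part, on $\{|\bx-\by|>\rho\}$ one has $|\bx-\by|^{-1}<\rho^{-1}$, whence
\[
  \Big|\int_{|\bx-\by|>\rho}\tfrac{(\bx-\by)^{\perp}}{|\bx-\by|^2}\,g_n(t,\by)\,\dd\by\Big|
  \le \tfrac{1}{\rho}\,\|g_n(t,\cdot)\|_{L^1(\R^2)}\le\tfrac{2M}{\rho},
\]
uniformly in $n$, $t$ and $\bx\in\Omega$; integrating over $[0,T]\times\Omega$ bounds the $L^q$-norm of the far part by $C(R,T,q)\,\rho^{-1}$, which is made arbitrarily small by choosing $\rho$ large, \emph{independently of $n$}.

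For the near part, the point is that $p>2$ forces the conjugate exponent $p'<2$, so the restricted kernel $|\cdot|^{-1}\mathbf{1}_{B(0,\rho)}$ lies in $L^{p'}(\R^2)$ with $\big\||\cdot|^{-1}\mathbf{1}_{B(0,\rho)}\big\|_{L^{p'}}=C(\rho)<\infty$. H\"older's inequality then gives, for $\bx\in\Omega$,
\[
  \Big|\int_{|\bx-\by|\le\rho}\tfrac{(\bx-\by)^{\perp}}{|\bx-\by|^2}\,g_n(t,\by)\,\dd\by\Big|
  \le C(\rho)\,\|g_n(t,\cdot)\|_{L^p(B(0,R+\rho))},
\]
using $B(\bx,\rho)\subset B(0,R+\rho)$. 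Raising to the $q$-th power, integrating in $\bx$ over $\Omega$ and then applying H\"older in $t$ (legitimate since $q<p$) bounds the $L^q([0,T]\times\Omega)$-norm of the near part by $C(\rho,R,T,q)\,\|g_n\|_{L^p([0,T]\times B(0,R+\rho))}$, which tends to $0$ as $n\to\infty$ by the assumed $L^p_{\loc}$-convergence, for each fixed $\rho$.

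Combining the two pieces via the triangle inequality (or its quasi-norm analogue when $q<1$, working with $q$-th powers), given $\eps>0$ I first fix $\rho$ so large that the far contribution is below $\eps/2$ for all $n$, and then let $n\to\infty$ to annihilate the near contribution; this yields $\limsup_n\|\bA[g_n]\|_{L^q([0,T]\times\Omega)}\le\eps/2$, and since $\eps$ and $\Omega$ were arbitrary the claim follows. I expect the main obstacle to be precisely this decoupling: $L^p_{\loc}$-convergence gives no control on the tails of $g_n$, and since $g_n$ does \emph{not} converge in $L^1$, the long-range contribution of the kernel $|\cdot|^{-1}$ can only be handled through the uniform mass bound, which renders the far part small uniformly in $n$ rather than convergent. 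Establishing the fibrewise $L^1$-bound on the limit $f$ (and $\bF$) by the subsequence-and-Fatou argument is the one place where measurability and Fubini care is needed.
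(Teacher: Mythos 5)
Your proof is correct and follows essentially the same route as the paper: split the kernel into a far part controlled by the uniform-in-time $L^1$ bound and a near part controlled via H\"older (using $p'<2$ to put the truncated kernel in $L^{p'}$) together with the $L^p_{\loc}$ convergence, then combine the two limits. The only differences are cosmetic --- the paper splits the $\by$-integration at a dilated disk $(1+r)B$ rather than at distance $\rho$ from $\bx$, and sends $n\to\infty$ before the splitting parameter to infinity --- while your explicit Fatou argument for the fibrewise $L^1$ bound on the limit $f$ carefully justifies a point the paper uses implicitly.
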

\begin{proof}
    Let $B \subset \R^2$ be a finite disk centered at the origin with radius $R_B>0$, $g_n := f_n-f$, and $r,q>0$. Define $B_r := (1+r)B$ which is the dilation of the disk $B$ with the scale $1+r.$ Then, 
\begin{align*}
 & \int_0^T  \int_{B} |\bA[g_n]|^q = \int_0^T \int_B \left |\int_{\R^2} \frac{\bx - \by}{|\bx-\by|^2} \,g_n(t,\by) \, \dd \by \right|^q \, \dd \bx  \dd t
    \\ & \leq 2^q    \int_0^T  \int_B \left |\int_{\R^2 \setminus B_r} \frac{\bx - \by}{|\bx-\by|^2} \,g_n(t,\by) \, \dd \by \right|^q \, \dd \bx \dd t + 2^q   \int_0^T  \int_B \left |\int_{ B_r} \frac{\bx - \by}{|\bx-\by|^2} \,g_n(t,\by) \, \dd \by \right|^q \, \dd \bx \dd t
    \\ & \leq \frac{2^q |B|}{\left(r R_B\right)^q} \int_{0}^T \|g_n(t,\cdot)\|^q_{L^1} \, \dd t +2^q \int_{0}^T \int_{ B} \left |\int_{B_r} \frac{\bx - \by}{|\bx-\by|^2} \,g_n(t,\by) \, \dd \by \right|^q \, \dd \bx \dd t
    \\ &  \leq \frac{2^q |B|}{\left(r R_B\right)^q} \int_{0}^T \|g_n(t,\cdot)\|^q_{L^1} \, \dd t+C(r,p,q,B) \int_0^T \|g_n(t,\cdot)\|^q_{L^p(B_r)} \, \dd t
    \\ & \leq \frac{2^q |B| T}{\left(r R_B\right)^q} \sup_{0\leq t \leq T} \|g_n(t,\cdot)\|^q_{L^1} +C(r,p,q,B) T^{\frac{p-q}{p}} \|g_n\|^q_{L^p([0,T] \times B_r)}
 \end{align*}
where we used  H\"older's inequality in the last two inequalities and, by $p>2$,
\begin{align*}
    C(r,p,q,B) := 2^q \int_{B} \left(\int_{B_r} \frac{1}{|\bx - \by|^{\frac{p}{p-1}}} \, \dd \by \right)^{\frac{q(p-1)}{p}} \, \dd \bx < \infty.
\end{align*}
    Now, first letting $n \to \infty$, we arrive at 
    \begin{align*}
        \limsup_{n \to \infty}  \int_{B} |\bA[g_n]|^q \leq  \frac{2^q |B| T}{\left(r R_B\right)^q} \sup_{0\leq t \leq T} \|g_n(t,\cdot)\|^q_{L^1}.
    \end{align*}
Hence, since $\sup_{0 \leq t \leq T}\|g_n(t,\cdot)\|_{L^1}$ is uniformly bounded and $r>0$ is arbitrary, we derive that 
$\bA[f_n]$ converges to $\bA[f]$ in  $L^q_{\loc}([0,T] \times \R^2;\C^2).$ Likewise, we can prove that
$\bA \ast [F_n]$ converges to $\bA \ast [F]$ in  $L^q_{\loc}([0,T] \times \R^2).$

\end{proof}

\section{Proof of the theorem}

\begin{proof}[Proof of Theorem \ref{thm:criticalconstant}]
   We follow the argument in the proof of \cite[Thm 2.3]{BeBoSa95}.  Assume that
  \eqref{eq:conditionglobalNLS} holds for $\psi_0 \in H^1(\R^2) \setminus 0$. Let $\varepsilon>0$ and $$\psi^{\varepsilon}_0 := (1 - \varepsilon^{\frac{1}{4}} \Delta)^{-1} \psi_0.$$ Then, by Lemma \ref{lem:basicfouriertypeestimate},
  \begin{align}
  \label{eq:uniformL2initialdata}
      \|\psi_0^{\varepsilon}\| ^2_{L^2(\R^2)} \leq  \|\psi_0\| ^2_{L^2(\R^2)}, \quad \|\nabla \psi_0^{\varepsilon}\| ^2_{L^2(\R^2)} \leq \|\nabla \psi_0\|^2_{L^2(\R^2)},
  \end{align}
  Moreover,  by the same argument as \cite[Lem. 2.2]{BeBoSa95}, there exists a unique $\psi^{\varepsilon} \in C(\R_+,H^2(\R^2))$ solution of 
  \begin{equation}
  \begin{aligned}
  \label{eq:approximatingsolution}
      &\ii \partial_t \psi^{\varepsilon} + \ii \varepsilon \Delta^2 \partial_t \psi^{\varepsilon} = \frac{1}{2} \frac{\partial}{\partial (\psi^{\varepsilon})^*} \cE_{\beta,\gamma}[\psi^{\varepsilon}] \\
      & =- \frac{1}{2} (\nabla + \ii \bA[|\psi^{\varepsilon}|^2]) \cdot (\nabla + \ii \bA[|\psi^{\varepsilon}|^2]) \psi^{\varepsilon}- \beta (\bA \ast \textbf{j}_{\beta}[[\psi^{\varepsilon}]]) \psi^{\varepsilon} - \gamma |\psi^{\varepsilon}|^2 \psi^{\varepsilon}
      ,
  \end{aligned}
  \end{equation}
in $\R_+ \times \R^2$, with the initial value $\psi_0^{\varepsilon}.$
      Then, by the assumption \eqref{eq:conditionglobalNLS} and the continuity of $\gamma_{\ast}$; see \cite[Prop. 3.26]{AtLuThi24}, for $\delta >0, \varepsilon_0>0$ small enough, we have
      \begin{equation}
\begin{aligned}
\label{eq:choiceofepsilon}
\gamma (1+\delta) <  \frac{\gamma_{\ast}\left(\beta (1+ \sqrt{\varepsilon})  \|\psi_0\|_{L^2}^2 \right)}{(1+ \sqrt{\varepsilon})  \|\psi_0\|_{L^2}^2 },
\end{aligned}
\end{equation}
for every $0<\varepsilon<\varepsilon_0.$
 In the rest of the argument, we consider $0<\varepsilon<\max\left(\varepsilon_0,1\right)$ such that \eqref{eq:choiceofepsilon} holds.

Now, we derive a global bound on $\|\psi^{\varepsilon}(t,\cdot)\|_{H^1(\R^2)}$.
    To prove the bound, we first use the conservation of the mass and energy in Proposition \ref{prop:conservationlaws} to imply that
    \begin{equation}
    \label{eq:conservationlaws}
\begin{aligned}
\int_{\R^2} |\psi^{\varepsilon}(t,\cdot)|^2 + \varepsilon \int_{\R^2} |\Delta \psi^{\varepsilon}(t,\cdot)|^2 &= \int_{\R^2} |\psi^{\varepsilon}_0|^2 + \varepsilon \int_{\R^2} |\Delta \psi^{\varepsilon}_0|^2,\\
    \cE_{\beta,\gamma}[\psi^{\varepsilon}(t,\cdot)] &= \cE_{\beta,\gamma}[\psi^{\varepsilon}_0],
\end{aligned}
\end{equation}
for every $ t \in \R_+$. Then, by applying the Fourier transform again, we obtain 
\begin{equation}
\begin{aligned}
\label{eq:uniformL2bound}
 \|\psi^{\varepsilon}(t,\cdot)\|_{L^2}^2 &\leq \|\psi_0\|^2_{L^2} + \sqrt{\varepsilon}   \left\| \varepsilon^{\frac{1}{4}} \Delta \psi_0^{\varepsilon}\right\|^2_{L^2}  \\&\leq \|\psi_0\|^2_{L^2}+ \sqrt{\varepsilon}    \left\|\psi_0\right\|^2_{L^2} = (1+ \sqrt{\varepsilon}) \|\psi_0\|^2_{L^2},
\end{aligned}
\end{equation}
for every $ t \in \R_+.$ Hence, by using that  $\frac{\gamma_{\ast}(\theta)}{\theta}$ is a decreasing function of $\theta>0$; see \cite[Lem. 3.18]{AtLuThi24}, we derive that 
\begin{align*}
\frac{\gamma_{\ast}\left(\beta  \|\psi^{\varepsilon}(t,\cdot)\|^2_{L^2} \right)}{\|\psi^{\varepsilon}(t,\cdot)\|^2_{L^2}}
    \geq \frac{\gamma_{\ast}\left(\beta (1+ \sqrt{\varepsilon})    \|\psi_0\|^2_{L^2} \right)}{(1+ \sqrt{\varepsilon})  
 \|\psi_0\|^2_{L^2}}.
\end{align*}
for every $t \in \R_+.$ Therefore, by \eqref{eq:choiceofepsilon}, it is implied that
\begin{align}
\label{eq:boundongammastar}
    \frac{\gamma_{\ast}\left(\beta  \|\psi^{\varepsilon}(t,\cdot)\|^2_{L^2} \right)}{\|\psi^{\varepsilon}(t,\cdot)\|^2_{L^2}} > \gamma (1+\delta)
\end{align}
for every $t \in \R_+.$ Then, by \eqref{eq:conservationlaws} and \eqref{eq:boundongammastar}, we obtain
\begin{align*}
   \cE_{\beta}[\psi^{\varepsilon}(t,\cdot)]
    &=  \gamma \int_{\R^2} |\psi^{\varepsilon}(t,\cdot)|^4 + \cE_{\beta,\gamma}[\psi^{\varepsilon}_0] \\ &\leq   \frac{\gamma \, \|\psi^{\varepsilon}(t, \cdot)\|^2_{L^2}}{\gamma_{\ast}\left(\beta \|\psi^{\varepsilon}(t,\cdot)\|^2_{L^2}  \right)} \, \cE_{\beta}[\psi^{\varepsilon}(t,\cdot)] +  \cE_{\beta,\gamma}[\psi^{\varepsilon}_0]
    \\ &=  \frac{1}{1+ \delta} \,\cE_{\beta}[\psi^{\varepsilon}(t,\cdot)] +  \cE_{\beta,\gamma}[\psi^{\varepsilon}_0].
\end{align*}
In conclusion, 
\begin{align*}
  \cE_{\beta}[\psi^{\varepsilon}(t,\cdot)] &\leq \frac{\delta+1}{\delta} \left |\cE_{\beta,\gamma}[\psi^{\varepsilon}_0]\right |,\\ \int_{\R^2} |\nabla \psi^{\varepsilon}(t,\cdot)|^2 & \leq  \left( 1 + \sqrt{\frac{3}{2}}\beta \int_{\R^2}|\psi^{\varepsilon}_0|^2 \right)^2  \cE_{\beta}[\psi^{\varepsilon}(t,\cdot)],
\end{align*}
where we used \cite[Lem. 3.3]{AtLuThi24} in the last inequality.
Therefore, by \eqref{eq:uniformL2initialdata}, \eqref{eq:uniformL2bound}, and the Gagliardo-Nirenberg interpolation inequality \eqref{eq:gagliardonirenberg},  $\| \psi^{\varepsilon}(t,\cdot)\|_{H^1}$ is uniformly bounded and
\begin{equation}
\begin{aligned}
    \label{eq:boundonH1}
    \|\psi^{\varepsilon}(t,\cdot)\|^2_{H^1}& \leq 2 \|\psi_0\|^2_{L^2} + 
 \frac{\delta+1}{\delta}\,
 \left( 1 + \sqrt{\frac{3}{2}}\,\beta \int_{\R^2}|\psi_0|^2 \right)^2 \left |\cE_{\beta,\gamma}[\psi^{\varepsilon}_0] \right |  \leq  C_1,
\end{aligned}
\end{equation}
for all $t \in \R_+ $, where $C_1$ is a constant depending on $\beta,\gamma,\delta,\|\psi_0\|_{H^1}$. Now, we aim to prove that every term on the right-hand-side of \eqref{eq:approximatingsolution} is uniformly bounded in $$L^2_{\loc}(\R_+,H^{-1}(\R^2)) + L^2_{\loc}(\R_+ \times \R^2).$$
To do so, we need to show that $\left\|\bA[|\psi^{\varepsilon}(t,\cdot)|^2]\right\|_{L^2}, \left\|\bA[|\psi^{\varepsilon}(t,\cdot)|^2]\right\|_{L^4},  \left\| \bA \ast [\textbf{j}_{\beta}[\psi^{\varepsilon}(t,\cdot)]]\right\|_{L^4} $ are uniformly bounded for $t \in \R_+.$

To estimate $\left\|\bA[|\psi^{\varepsilon}(t,\cdot)|^2]\right\|_{L^2}$, we use Diamagnetic inequality \eqref{eq:AF-diamagnetic}, Hardy-type inequality \eqref{eq:AF-H1}, and \eqref{eq:boundonH1}, to obtain that 
\begin{align*}
  \left \|\bA[|\psi^{\varepsilon}(t,\cdot)|^2]\right\|^2_{L^2}
    \leq \frac{3}{2} \left(\int_{\R^2} |\psi^{\varepsilon}(t,\cdot)|^2\right)^2 \left(\int_{\R^2} \bigl|\nabla\psi^{\varepsilon}(t,\cdot) \bigr|^2\right) \leq \frac{
3}{2} C_1^3,
\end{align*}
for every $t \in \R_+$. 

For $\left\|\bA[|\psi^{\varepsilon}(t,\cdot)|^2]\right\|_{L^4}$, by Proposition \ref{prop:Youngtypeineq} and the Gagliardo-Nirenberg interpolation inequality \eqref{eq:gagliardonirenberg}, we have 
\begin{equation}
\label{eq:Lpgaugeestimate}
\begin{aligned}
\left\|\bA[|\psi^{\varepsilon}(t,\cdot)|^2]\right\|_{L^4} &\leq C_2   \|\psi^{\varepsilon}(t,\cdot)\|^2_{L^{\frac{8}{3}}(\R^2)}  \\&\leq C_2 \, C_3 \|\psi^{\varepsilon}(t,\cdot)\|^{\frac{1}{2}}_{L^2} \|\nabla \psi^{\varepsilon}(t,\cdot)\|^{\frac{1}{3}}_{L^2} \leq 
C_1^{\frac{5}{12}}\, C_2 \, C_3  .
\end{aligned}
\end{equation}
for every $t \in \R_+$, where $C_2,C_3$ are universal constants. Moreover, 
\begin{align*}
 &  \left\| \bA \ast [\textbf{j}_{\beta}[\psi^{\varepsilon}(t,\cdot)]]\right\|_{L^4} \leq  C_2  \|\textbf{j}_{\beta}[\psi^{\varepsilon}(t,\cdot)]\|_{L^{\frac{4}{3}}} \\&\leq   C_2  \left(\|\bJ[\psi^{\varepsilon}(t,\cdot)]\|_{L^{\frac{4}{3}}} + \beta \left\|\bA[|\psi^{\varepsilon}(t,\cdot)| ^2] |\psi^{\varepsilon}(t,\cdot)|^2 \right\|_{L^{\frac{4}{3}}}\right)\\ 
   &\leq  C_2  \left(\|\psi^{\varepsilon}(t,\cdot)\|_{L^{4}} \|\nabla \psi^{\varepsilon}(t,\cdot)\|_{L^2} +\beta \left\|\bA[|\psi^{\varepsilon}(t,\cdot)| ^2]  \right\|_{L^{4}} \left\| \psi^{\varepsilon}(t,\cdot)  \right\|^2_{L^{4}}\right)
   \\ & \leq  C_2  \left(C_4\|\psi^{\varepsilon}(t,\cdot)\|^{\frac{1}{2}}_{L^{2}} \|\nabla \psi^{\varepsilon}(t,\cdot)\|^{\frac{3}{2}}_{L^2} +\beta C_1^{\frac{1}{3}}\, C_2 \, C_3  C_4\|\psi^{\varepsilon}(t,\cdot)\|_{L^{2}} \|\nabla \psi^{\varepsilon}(t,\cdot)\|_{L^2}\right)
   \\ &\leq    C_1\, C_2\, C_4 +\beta C_1^{\frac{4}{3}}\, C_2^2 \, C_3  C_4.\, 
\end{align*}
Here, we used Proposition \ref{prop:Youngtypeineq} on the first inequality, H\"older's inequality on the third inequality, \eqref{eq:Lpgaugeestimate} and the Gagliardo-Nirenberg interpolation inequality \eqref{eq:gagliardonirenberg} on the fourth inequality, and \eqref{eq:boundonH1} on the last inequality. In conclusion, by Lemma \ref{lem:basicfouriertypeestimate} $(iii)$ and \eqref{eq:approximatingsolution}, we derive that $\partial_t \psi ^{\varepsilon}$ is uniformly bounded in  $L^2_{\loc}(\R_+,H^{-1}(\R^2)),$ and $\psi^{\varepsilon}$ is uniformly bounded in $L^{\infty}(\R_+; H^1(\R^2))$. Then, by a classical theorem of Lions, see \cite{Lions}, we conclude that, up to a subsequence,  $\psi^{\varepsilon}$ converges to $\psi  \in  L^{\infty}(\R_+, H^1(\R^2)) \cap C(\R_+,L^2(\R^2))$ weakly in $L^2_{\loc}(\R_+,H^1(\R^2))$, pointwise a.e. in $\R_+ \times \R^2$, and strongly in the space $L^q_{\loc}(\R_+ \times \R^2)$ for every $1 \leq q< \infty$.

Now, we claim that $\psi$ satisfies \eqref{eq:CSNLS} weakly in $\R_+ \times \R^2$ with the initial data $\psi_0.$ To prove the claim, by using the Fourier transform, we have the convergence of $\psi^{\varepsilon}_0$ to $\psi_0$ in $H^1(\R^2)$, which derives that $\psi(0,\cdot) = \psi_0(\cdot).$ Hence, it is sufficient to prove that the right hand side of \eqref{eq:approximatingsolution} converges to the right hand side of \eqref{eq:CSNLS} in the distribution sense. Let $T>0$ and $\varphi \in C([0,T];C^{\infty}_c(\R^2))$ be a test function. 
Since, by \eqref{eq:uniformL2bound}, $\sup_{t \in \R_+} \left\| | \psi_{\varepsilon}(t,\cdot)|^2 \right\|_{L^1}$ is uniformly bounded and $\psi_{\varepsilon}$ converges to $\psi$ in $L^q_{\loc}(\R_+ \times \R^2)$ for every $1 \leq q < \infty$, by Lemma \ref{lem:localconvergauge}, we derive that $\bA[|\psi_{\varepsilon}|^2]$ converges to $\bA[|\psi|^2]$ in $L^p_{\loc}(\R_+ \times \R^2)$ for every $0<p< \infty$. Then, 
\begin{align*}
   &\lim_{\varepsilon \to 0}  \int_0^T \int_{\R^2}
- (\nabla + \ii \beta \bA[|\psi^{\varepsilon}|^2]) \cdot (\nabla + \ii \beta \bA[|\psi^{\varepsilon}|^2]) \psi^{\varepsilon}
   \, \overline{\varphi} \\&= \lim_{\varepsilon \to 0} \int_0^T \int_{\R^2} (\nabla + \ii \beta \bA[|\psi^{\varepsilon}|^2]) \psi^{\varepsilon} \cdot \overline{ (\nabla + \ii \beta \bA[|\psi^{\varepsilon}|^2]) \varphi}
   \\&=\int_0^T \int_{\R^2} (\nabla + \ii \beta \bA[|\psi|^2]) \psi \cdot \overline{ (\nabla + \ii \beta \bA[|\psi|^2]) \varphi},
\end{align*}
where on the last equality we use the weak convergence of $\psi^{\epsilon}$ to $\psi$ in $L^2_{\loc}(\R_+,H^1(\R^2))$.

To show the converges of the term $(\bA \ast \textbf{j}_{\beta}[[\psi^{\varepsilon}]]) \psi^{\varepsilon}$ in the distribution, we need more work. First, we note that $\textbf{j}_{\beta}[\psi^{\varepsilon}] = \bJ[\psi^{\varepsilon}] + \beta \bA[|\psi^{\varepsilon}| ^2] |\psi^{\varepsilon}|^2.$ Hence, it is enough to prove the convergence of $(\bA \ast [\bJ[\psi^{\varepsilon}]]) \psi^{\varepsilon} $ and $(\bA \ast [\bA[|\psi^{\varepsilon}| ^2] |\psi^{\varepsilon}|^2]) \psi^{\varepsilon} $ separately. For the second one, we use Lemma \ref{lem:localconvergauge}. To prove the conditions of  Lemma \ref{lem:localconvergauge}, we consider
\begin{align*}
   \sup_{t \in \R_+} \int_{\R^2} \left|\bA[|\psi^{\varepsilon}(t,\cdot)| ^2]\right| |\psi^{\varepsilon}(t,\cdot)|^2 &\leq   \sup_{t \in \R_+} \left \|\bA[|\psi^{\varepsilon}(t,\cdot)| ^2]\right \|_{L^4} \left\|\psi^{\varepsilon}(t,\cdot)\right\|^2_{L^{\frac{8}{3}}}
    \\ & \leq  C_1^{\frac{1}{3}}\, C_2 \, C_3 \,  \sup_{t \in \R_+} \|\psi^{\varepsilon}(t,\cdot)\|^{\frac{3}{2}}_{L^2} \, \|\nabla \psi^{\varepsilon}(t,\cdot)\|^{\frac{1}{2}}_{L^2} 
    \\ & \leq  C_1^{\frac{4}{3}}\, C_2 \, C_3,
\end{align*}
where H\"older's inequality is used on the first inequality, \eqref{eq:Lpgaugeestimate} and the Gagliardo-Nirenberg's inequality \eqref{eq:gagliardonirenberg} are used on the second inequality, and \eqref{eq:boundonH1} is used on the last inequality. Moreover, since $\bA[|\psi^{\varepsilon}| ^2],\psi^{\varepsilon}$ converges to $\bA[|\psi|^2],\psi$ in $L^p_{\loc}(\R_+ \times \R^2;\R^2),L^p_{\loc}(\R_+ \times \R^2)$, respectively, for every $1\leq p < \infty$,  we derive that $\bA[|\psi^{\varepsilon}| ^2] |\psi^{\varepsilon}|^2$ converges to $\bA[|\psi| ^2] |\psi|^2$ in  $L^q_{\loc}(\R_+ \times \R^2;\R^2)$ for every $1 \leq q < \infty$. Hence, by Lemma \ref{lem:localconvergauge}, it is implied that  $\bA \ast [\bA[|\psi^{\varepsilon}| ^2] |\psi^{\varepsilon}|^2]$ converges to $\bA \ast [\bA[|\psi| ^2] |\psi|^2]$ in $L^p_{\loc}(\R_+ \times \R^2)$ for every $0<p < \infty$.

Now, it is remained to prove the convergence of $(\bA \ast [\bJ[\psi^{\varepsilon}]]) \psi^{\varepsilon}$ in the sense of distribution.
To see this, we first use Lemma \ref{lem:localconvergauge} again to derive that $\bA\left[\psi^{\varepsilon}  \, \varphi \right]$ converges strongly to $\bA\left[\psi  \, \varphi \right]$ in $L^p_{\loc}([0,T] \times \R^2)$ for every $0<p<\infty$. Then, by the weak convergence of $\psi^{\epsilon}$ to $\psi$ in $L^2_{\loc}(\R_+,H^1(\R^2))$, we have
\begin{align*} \lim_{\varepsilon \to 0}   \int_0^T  \int_{\R^2}
(\bA \ast [\bJ[\psi^{\varepsilon}]]) \psi^{\varepsilon} 
   \, \varphi &= -\lim_{\varepsilon \to 0} \int_0^T  \int_{\R^2}
\bJ[\psi^{\varepsilon}] \cdot \bA\left[\psi^{\varepsilon} 
   \, \varphi \right] \\
  &= -\lim_{\varepsilon \to 0} \int_0^T  \int_{B}
\bJ[\psi^{\varepsilon}] \cdot \bA\left[\psi^{\varepsilon} 
   \, \varphi \right] \\& -\lim_{\varepsilon \to 0} \int_0^T  \int_{\R^2 \setminus B}
\bJ[\psi^{\varepsilon}] \cdot \bA\left[\psi^{\varepsilon} 
   \, \varphi \right]
   \\ & =-\int_0^T  \int_{B}
\bJ[\psi] \cdot \bA\left[\psi
   \, \varphi \right] \\& -\lim_{\varepsilon \to 0} \int_0^T  \int_{\R^2 \setminus B}
\bJ[\psi^{\varepsilon}] \cdot \bA\left[\psi^{\varepsilon} 
   \, \varphi \right],
\end{align*}
for every disk $B \subset \R^2.$ Now, we prove that the second term decreases uniformly as $|B| \to \infty$. Assume that the disk $B$ is large enough such that 
\begin{align*}
    \dist\left(\supp(\varphi),\R^2 \setminus B \right) \geq \frac{1}{\delta}
\end{align*}
for $\delta >0.$ Then, by the Cauchy-Schwarz inequality and \eqref{eq:boundonH1},
\begin{align*}
 & \left |   \int_0^T  \int_{\R^2 \setminus B}
\bJ[\psi^{\varepsilon}] \cdot \bA\left[\psi^{\varepsilon} 
   \, \varphi \right] \right | \\& \leq   \int_0^T  \int_{\R^2 \setminus B} |\nabla \psi^{\varepsilon}(t,\bx)| \, |\psi^{\varepsilon}(t,\bx)|\, \left(\int_{\R^2} \frac{1}{|\bx -\by|} |\psi^{\varepsilon}(t,\by)|\, |\varphi(t,\by)| \dd \by\right) \, \dd \bx \dd t
   \\ & \leq \delta \int_0^T \int_{\R^2 \setminus B}|\nabla \psi^{\varepsilon}(t,\cdot)| \, |\psi^{\varepsilon}(t,\cdot)| \, \dd t \int_{\R^2} |\psi^{\varepsilon}(t,\cdot)|\, |\varphi(t,\cdot)| \, \dd t
   \\ & \leq  \delta \int_0^T \|\nabla \psi^{\varepsilon}(t,\cdot)\|_{L^2} \| \psi^{\varepsilon}(t,\cdot)\|^2_{L^2} \| \varphi(t,\cdot)\|_{L^2} \, \dd t \leq \delta  C_1^{\frac{3}{2}} \,  \int_0^T \| \varphi(t,\cdot)\|_{L^2} \, \dd t.  
\end{align*}
Moreover, the same approximation holds by replacing $\psi^{\varepsilon}$ with $\psi$.
Hence, letting $\delta \to 0$, we obtain that
\begin{align*}
     \lim_{\varepsilon \to 0}   \int_0^T  \int_{\R^2}
(\bA \ast [\bJ[\psi^{\varepsilon}]]) \psi^{\varepsilon} 
   \, \varphi =     \int_0^T  \int_{\R^2}
(\bA \ast [\bJ[\psi]]) \psi
   \, \varphi.
\end{align*}
Finally, the term $|\psi^{\varepsilon}|^2 \psi^{\varepsilon} $ in \eqref{eq:approximatingsolution} converges to $|\psi|^2 \psi $  in $L^p_{\loc}(\R_+ \times \R^2)$ for every $p \geq 1$, which completes the proof of the claim.

Now, to prove the other side of the theorem, assume that $\alpha >0$ such that 
\begin{align*}
    \gamma > \frac{\gamma_{\ast}(\alpha \beta)}{\alpha}.
\end{align*}
    Then, by definition of $\gamma_{\ast}(\alpha \beta)$, there exists a function $\phi \in C^{\infty}_c(\R^2)$ which satisfies $\int_{\R^2} |\phi|^2=1$ and
\begin{align*}
    \cE_{\alpha \beta, \alpha \gamma}[\phi] < 0.
\end{align*}
Now, let $\psi_0 := \sqrt{\alpha} \, \phi $ and $\psi \in C([0,T],H^2(\R^2))$ be a local solution of \eqref{eq:CSNLS} in $\R_+ \times \R^2$ with the initial data $\psi_0$; see \cite[Thm 2.1]{BeBoSa95} for the existence of such solutions. We claim that $\psi$ blows up at some finite time. By \cite[Thm 3.1]{BeBoSa95}, it is enough to prove that 
\begin{align*}
    \cE_{\beta,\gamma}[\psi_0] <0. 
\end{align*}
To prove this, we use the scaling property of $\cE_{\beta,\gamma}$ and
\begin{align*}
     \cE_{\beta,\gamma}[\psi_0] =  \cE_{\beta,\gamma}[\sqrt{\alpha} \, \phi] =\alpha \, \cE_{\alpha \, \beta, \alpha \, \gamma}[\phi] < 0.
\end{align*}

\end{proof}

\begin{corollary}
    Let $\psi_0 \in H^1(\R^2), \beta>0, \gamma \in \R_+$, such that $\|\psi_0\|_{L^2}^2 \geq \frac{2}{\beta}$. Then, there exists a global solution $\psi$ to \eqref{eq:CSNLS} in $\R_+ \times \R^2$ with the initial data $\psi_0$ if $\gamma < 2\pi \beta.$ 
\end{corollary}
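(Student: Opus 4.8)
The plan is to reduce the statement directly to the sharp global existence criterion of Theorem \ref{thm:criticalconstant}, the only work being to evaluate the interpolation constant $\gamma_\ast$ in the regime forced by the mass hypothesis. Concretely, I would set $\theta := \beta\|\psi_0\|_{L^2}^2$ and observe that the assumption $\|\psi_0\|_{L^2}^2 \geq \tfrac{2}{\beta}$ is equivalent to $\theta \geq 2$. In this range the interpolation constant is known explicitly, namely $\gamma_\ast(\theta) = 2\pi\theta$ for $\theta \geq 2$; see \cite[Thm. 2]{AtLuThi24}, as recorded in the Remark following Theorem \ref{thm:criticalconstant}.

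Using this explicit form, the right-hand side of the global existence condition \eqref{eq:conditionglobalNLS} collapses to a constant independent of the mass:
\begin{align*}
\frac{\gamma_\ast\bigl(\beta\|\psi_0\|_{L^2}^2\bigr)}{\|\psi_0\|_{L^2}^2}
= \frac{2\pi\beta\,\|\psi_0\|_{L^2}^2}{\|\psi_0\|_{L^2}^2}
= 2\pi\beta.
\end{align*}
Hence the hypothesis $\gamma < 2\pi\beta$ is exactly the condition \eqref{eq:conditionglobalNLS} for this initial datum, and Theorem \ref{thm:criticalconstant} immediately produces a global solution $\psi \in L^{\infty}(\R_+, H^1(\R^2)) \cap C(\R_+, L^2(\R^2))$ of \eqref{eq:CSNLS} with initial data $\psi_0$, completing the argument.

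There is no substantive obstacle here: the statement is a direct corollary, with all the analytic content already carried by the proof of Theorem \ref{thm:criticalconstant} and by the computation of $\gamma_\ast$ on $[2,\infty)$ borrowed from \cite{AtLuThi24}. The only point requiring care is the translation of the mass threshold $\|\psi_0\|_{L^2}^2 = \tfrac{2}{\beta}$ into the threshold $\theta = 2$ at which $\gamma_\ast$ attains its linear form; this is precisely the mechanism that makes the resulting bound $2\pi\beta$ independent of $\|\psi_0\|_{L^2}$, which is the surprising feature emphasized in the Remark after Theorem \ref{thm:criticalconstant}.
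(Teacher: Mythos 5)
Your proof is correct and is essentially identical to the paper's own argument: both invoke $\gamma_{\ast}(\theta) = 2\pi\theta$ for $\theta \geq 2$ from \cite[Thm.~2]{AtLuThi24} to compute $\frac{\gamma_{\ast}(\beta\|\psi_0\|_{L^2}^2)}{\|\psi_0\|_{L^2}^2} = 2\pi\beta$, and then apply Theorem \ref{thm:criticalconstant}. The only difference is that you spell out the change of variable $\theta = \beta\|\psi_0\|_{L^2}^2$ explicitly, which the paper leaves implicit.
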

\begin{proof}
    By \cite[Thm. 2]{AtLuThi24}, we have 
    \begin{align*}
        \frac{\gamma_{\ast}\left(\beta \|\psi_0\|_{L^2}^2 \right)}{\|\psi_0\|_{L^2}^2 } =2\pi \beta.
    \end{align*}
  Hence, the proof follows from  \Cref{thm:criticalconstant}.
\end{proof}

\section{Critical case}

\begin{lemma}
  \label{lem:H2boundminimizer}
    Let $\psi \in H^1(\R^2)$ be a minimizer for $\gamma_{\ast}(\beta)$. Then, $\psi \in H^{2}(\R^2)$.
\end{lemma}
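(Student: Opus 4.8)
The plan is to show that any minimizer of $\gamma_{\ast}(\beta)$ solves the stationary (Euler--Lagrange) equation attached to the quotient, which is exactly the time-independent version of \eqref{eq:CSNLS}, and then to run a one-step elliptic bootstrap that upgrades $H^1$ to $H^2$. First I would derive the Euler--Lagrange equation. Since $\psi$ minimizes $\int_{\R^2}|(\nabla+\ii\beta\bA[|\phi|^2])\phi|^2 \big/ \int_{\R^2}|\phi|^4$ under the constraint $\int_{\R^2}|\phi|^2=1$, the first variation (with $\cE_{\beta,\gamma_{\ast}(\beta)}[\psi]=0$ used to simplify the quotient's derivative) gives, for some Lagrange multiplier $\mu\in\R$,
\[
-\tfrac{1}{2}(\nabla+\ii\beta\bA[|\psi|^2])\cdot(\nabla+\ii\beta\bA[|\psi|^2])\psi-\beta(\bA\ast[\textbf{j}_{\beta}[\psi]])\psi-\gamma_{\ast}(\beta)|\psi|^2\psi=\mu\psi,
\]
weakly; this is precisely $\tfrac{1}{2}\tfrac{\partial}{\partial\psi^{*}}\cE_{\beta,\gamma_{\ast}(\beta)}[\psi]=\mu\psi$. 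Using $\div\bA[|\psi|^2]=0$ (because $\bA[f]=(\nabla^{\perp}\log|\cdot|)\ast f$ and $\div\nabla^{\perp}=0$) to expand the magnetic Laplacian, this rearranges to
\[
\tfrac{1}{2}\Delta\psi=-\ii\beta\,\bA[|\psi|^2]\cdot\nabla\psi+\tfrac{\beta^2}{2}|\bA[|\psi|^2]|^2\psi-\beta(\bA\ast[\textbf{j}_{\beta}[\psi]])\psi-\gamma_{\ast}(\beta)|\psi|^2\psi-\mu\psi.
\]
It then suffices to prove that the right-hand side lies in $L^2(\R^2)$: this yields $\Delta\psi\in L^2$, and together with $\psi\in H^1$ it gives $\psi\in H^2(\R^2)$.

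For the term-by-term estimates I would first record the two-dimensional embedding $H^1(\R^2)\hookrightarrow L^q(\R^2)$ for all $2\leq q<\infty$, which follows from \eqref{eq:gagliardonirenberg}, so $\psi\in L^q$ for every $q\in[2,\infty)$ and hence $|\psi|^2\in L^1\cap L^p$ for every $p\in[1,\infty)$. In particular Proposition \ref{prop:LinfinityestimateonA} gives $\bA[|\psi|^2]\in L^{\infty}(\R^2)$. This immediately disposes of most terms: $\bA[|\psi|^2]\cdot\nabla\psi\in L^2$ since $\nabla\psi\in L^2$; $|\bA[|\psi|^2]|^2\psi\in L^2$ since $\psi\in L^2$; and $|\psi|^2\psi\in L^2$ (as $\psi\in L^6$) and $\mu\psi\in L^2$.

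The only delicate term is $(\bA\ast[\textbf{j}_{\beta}[\psi]])\psi$, which I would control via the smoothing estimate of Proposition \ref{prop:Youngtypeineq}. Writing $\textbf{j}_{\beta}[\psi]=\bJ[\psi]+\beta\bA[|\psi|^2]|\psi|^2$, H\"older's inequality (pairing $\nabla\psi\in L^2$ with $\psi\in L^q$ for $q$ large) shows $\bJ[\psi]=\im(\psi^{*}\nabla\psi)\in L^s$ for every $s\in(1,2)$, while the $L^\infty$-bound on $\bA[|\psi|^2]$ gives $\bA[|\psi|^2]|\psi|^2\in L^s$ for the same range; hence $\textbf{j}_{\beta}[\psi]\in L^s$. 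Fixing, say, $s=\tfrac{3}{2}$, Proposition \ref{prop:Youngtypeineq} gives $\bA\ast[\textbf{j}_{\beta}[\psi]]\in L^{r}$ with $r=\tfrac{2s}{2-s}=6>2$, and pairing with $\psi\in L^{r'}$, $r'=\tfrac{2r}{r-2}=3<\infty$, yields $(\bA\ast[\textbf{j}_{\beta}[\psi]])\psi\in L^2$. This closes the bootstrap.

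The main obstacle I expect is twofold. First, the rigorous weak formulation of the Euler--Lagrange equation must carefully track the nonlocal, nonlinear dependence of the magnetic energy on $|\phi|^2$, since it is exactly this variation that produces the self-interaction term $\bA\ast[\textbf{j}_{\beta}[\psi]]$, and one must verify it is well defined on the minimizer. Second, among the resulting terms only the current piece $\bA\ast[\bJ[\psi]]\psi$ is genuinely borderline: because $\bJ[\psi]$ contains a derivative of $\psi$ it just fails to be $L^2$, and all the integrability gain comes from the smoothing of the nonlocal operator $\bA\ast$ in Proposition \ref{prop:Youngtypeineq}. The remaining terms are comfortably handled by the favorable $H^1(\R^2)\hookrightarrow L^q$ embedding in two dimensions together with the boundedness of $\bA[|\psi|^2]$.
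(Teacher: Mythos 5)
Your proof is correct and follows essentially the same route as the paper: the paper likewise passes to the Euler--Lagrange equation (quoting it, together with smoothness of minimizers, from \cite[Lem.~3.10]{AtLuThi24} rather than re-deriving the first variation), expands the magnetic Laplacian using $\div \bA[|\psi|^2]=0$, gets $\bA[|\psi|^2]\in L^{\infty}$ from Proposition \ref{prop:LinfinityestimateonA}, and controls the nonlocal current terms through the smoothing of Proposition \ref{prop:Youngtypeineq} exactly as you do (the paper pairs $L^{4/3}\to L^{4}$ against $\psi\in L^{4}$ instead of your $L^{3/2}\to L^{6}$ against $\psi\in L^{3}$, an immaterial difference). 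The one step you only sketch---the rigorous first-variation computation producing the $\bA\ast[\textbf{j}_{\beta}[\psi]]$ term---is precisely what the paper outsources to the cited lemma, so your acknowledged ``main obstacle'' is resolved there by citation rather than by argument.
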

\begin{proof}
    First, by \cite[Lem. 3.10]{AtLuThi24}, we have $\psi \in C^{\infty}(\R^2)$ and satisfies 
\begin{align*}
      \Big[-\left(\nabla + {\rm i}\beta\bA\left[|\psi|^2\right]\right)\cdot \left(\nabla + {\rm i}\beta\bA\left[|\psi|^2\right]\right) - 2 \beta \bA \ast [\textbf{j}_{\beta}[\psi]] - 2\gamma_{*}(\beta)|\psi|^2\Big] \psi = \lambda \psi,
\end{align*}
    in $\R^2$, where 
\begin{align*}
    \lambda = \beta^2 \int |\bA[|\psi|^2] \psi|^2 - \int_{\R^2} |\nabla \psi|^2. 
\end{align*}
    Hence,
\begin{align}
\label{eq:shorterformofequa}
\Delta u = \sum_{i=1}^6 I_i,
\end{align}
where
\begin{align*}
I_1 &:= \beta^2\left|\bA\left[|\psi|^2\right]\right|^2 \psi, \\
I_2 &:= - 2\beta^2 \bA 
*\left[|\psi|^2 \bA\left[|\psi|^2\right]\right] \psi,\\
I_3 &:= - 2\beta \bA * \left[\bJ[\psi]\right] \psi,\\
I_4 &:=  - 2\ii \beta \bA\left[|\psi|^2\right] \cdot \nabla \psi,\\
I_5 &:=  - 2\gamma_*(\beta)|\psi|^2\psi,\\
I_6 &:= -  \lambda \psi.
\end{align*}
Since $\psi \in H^1(\R^2)$, to prove the lemma it is enough to demonstrate $\Delta \psi \in L^2(\R^2).$ We prove that each term in \eqref{eq:shorterformofequa} belongs to $L^2(\R^2)$. Obviously, $I_6 \in L^2(\R^2)$ and, by the Gagliardo-Nirenberg interpolation inequality \eqref{eq:gagliardonirenberg}, $I_5 \in L^2(\R^2).$ Now, since, by the Gagliardo-Nirenberg interpolation inequality \eqref{eq:gagliardonirenberg},  $\psi \in L^p(\R^2)$ for every $p \geq 2$, by Proposition \ref{prop:LinfinityestimateonA}, we derive that $$\bA[|\psi|^2] \in L^{\infty}(\R^2),$$ and $I_1, I_4 \in L^2(\R^2).$ For the term $I_2$, we have
\begin{align*}
  \frac{1}{2\beta^2}  \|I_2\|_{L^2}
  &\leq \left\| \bA \ast [|\psi|^2 \bA[|\psi|^2]]  \right\|_{L^4} \,\|\psi\|_{L^4} 
  \\&\leq C_1 \left\||\psi|^2 \bA[|\psi|^2] \right\|_{L^{\frac{4}{3}}} \|\psi\|_{L^4}
  \\ & \leq C_1  \left\|\bA[|\psi|^2]\right\|_{L^{\infty}} \|\psi  \|^2_{L^{\frac{8}{3}}} \|\psi\|_{L^4}
  \\ & \leq C_2 \left\|\bA[|\psi|^2]\right\|_{L^{\infty}} \|\psi\|^{\frac{3}{2}}_{L^2} \|\nabla \psi\|_{L^2},
\end{align*}for some universal constants $C_1,C_2>0$. Here, we use H\"older's inequality on the first inequality, Proposition \ref{prop:Youngtypeineq} on the second inequality, and the Gagliardo-Nirenberg interpolation inequality \eqref{eq:gagliardonirenberg} on the last inequality. Similarly, we consider
\begin{align*}
   \frac{1}{2\beta} \|I_3\|_{L^2} &\leq  \left\| \bA \ast [\bJ[\psi]]  \right\|_{L^4} \,\|\psi\|_{L^4}
   \\ &\leq  C_1  \|\bJ[\psi]\|_{L^{\frac{4}{3}}} \|\psi\|_{L^4}
   \\ &\leq C_1  \|\nabla \psi\|_{L^2}  \|\psi\|^2_{L^4}
   \\ &\leq C_3 \|\nabla \psi\|^2_{L^2} \|\psi\|_{L^2},
\end{align*}
for a universal constant $C_3>0,$ where H\"older's inequality is applied on the first and the third inequalities, Proposition \ref{prop:Youngtypeineq} is applied on the second inequality, and the Gagliardo-Nirenberg interpolation inequality \eqref{eq:gagliardonirenberg} is applied on the last inequality. This completes the proof.

\end{proof}
Now, we prove Theorem \eqref{thm:criticalCSS}.

\begin{proof}[Proof of Theorem \ref{thm:criticalCSS}]
    First, by the assumptions, $\phi_0 :=\frac{\psi_0}{\|\psi_0\|_{L^2}}$ is a minimizer for $\gamma_{\ast}(\alpha) = \gamma \|\psi_0\|^2_{L^2}$ where $$\alpha:=\beta \|\psi_0\|^2_{L^2}.$$ Hence, by \cite[Lem. 3.10]{AtLuThi24} and \Cref{lem:H2boundminimizer}, we derive that $\phi_0 \in C^{\infty}(\R^2) \cap H^2(\R^2)$ and satisfy
\begin{equation}\label{eq:EL}
\Big[-\left(\nabla + {\rm i}\alpha\bA\left[|\phi_0|^2\right]\right)\cdot \left(\nabla + {\rm i}\alpha\bA\left[|\phi_0|^2\right]\right) - 2 \alpha \bA \ast \textbf{j}_{\alpha}[\phi_0] - 2\gamma_{*}(\alpha)|\phi_0|^2\Big] \phi_0 = \lambda_{\beta}(\psi_0) \phi_0,
\end{equation}
in $\R^2.$ In conclusion, by \cite[Thm. 2.1]{BeBoSa95}, there exists a unique local solution $\psi \in C([0,T],H^2(\R^2))$ of \eqref{eq:CSNLS} with the initial data $\psi_0$. Moreover, by Proposition \ref{prop:conservationlaws}, we derive that 
\begin{align*}
\|\psi(t,\cdot)\|_{L^2} &= \|\psi_0\|_{L^2},\\
    \cE_{\beta, \gamma}[\psi(t,\cdot)] &=  \cE_{\beta, \gamma}[\psi_0] = 0,
\end{align*}
for every $0 \leq t \leq T.$
Hence, for every $0 \leq t \leq T$ , $\phi(t,\cdot) := \frac{\psi(t,\cdot)}{\|\psi_0\|_{L^2}}$ is also a minimizer for $\gamma_{\ast}(\alpha)$
and, by using \cite[Lem. 3.10]{AtLuThi24} again, satisfies the variational equation 
\begin{align*}
    \Big[-\left(\nabla + {\rm i}\alpha\bA\left[|\phi(t,\cdot)|^2\right]\right)\cdot \left(\nabla + {\rm i}\alpha\bA\left[|\phi(t,\cdot)|^2\right]\right) - 2 \alpha \bA \ast \textbf{j}_{\alpha}[\phi(t,\cdot)] \\- 2\gamma_{*}(\alpha)|\phi(t,\cdot)|^2\Big] \phi(t,\cdot) = \lambda(t) \phi(t,\cdot),
\end{align*}
in $\R^2$,
where 
\begin{align*}
    \lambda(t) &:=\alpha^2 \int_{\R^2} |\bA[|\phi(t,\cdot)| ^2] \phi(t,\cdot)|^2 - \int_{\R^2} |\nabla \phi(t,\cdot)|^2 \\& = \frac{1}{\|\psi_0\|^2_{L^2}} \left(\beta^2 \int_{\R^2} |\bA[|\psi(t,\cdot)|^2] \psi(t,\cdot)| ^2 - \int_{\R^2} |\nabla \psi(t,\cdot)|^2\right).
 \end{align*}
 Then, by combining the variational equation for $\phi(t,\cdot)$ with \eqref{eq:CSNLS}, we obtain that 
\begin{align*}
   \partial_t \psi(t,\cdot) = \frac{- \ii \lambda(t)}{2} \psi(t,\cdot) 
\end{align*}
for every $0<t < T$. By solving the equation, we conclude that 
\begin{align*}
    \psi(t,\cdot) = \psi_0 \, e^{-\frac{\ii}{2} \int_{0}^t \lambda(s) \, \dd s},
\end{align*}
for every $0 \leq t \leq T$. Hence, for every $0 \leq t \leq T$,
\begin{align*}
    \lambda(t) = \frac{1}{\|\psi_0\|^2_{L^2}} \left(\beta^2 \int_{\R^2} |\bA[|\psi_0|^2] \psi_0| ^2 - \int_{\R^2} |\nabla \psi_0|^2\right) = \lambda_{\beta}(\psi_0),
\end{align*}
which concludes the proof of \eqref{eq:criticalsolution}.
    
\end{proof}

The next lemma gives a more concrete way to compute $\lambda_{\beta}(\psi_0)$ in Theorem \ref{thm:criticalCSS}.

\begin{lemma}
\label{lem:lambdacomputation}
 Let $\beta \in \R_+$ and $\psi_0 \in H^1(\R^2)$ be a minimizer for $\gamma_{\ast}(\beta)$. Assume that $\gamma_{\ast}$ has a derivative at $\beta.$  Then,
\begin{align*}
    \beta^2 \int_{\R^2} |\bA[|\psi_0|^2] \psi_0| ^2 - \int_{\R^2} |\nabla \psi_0|^2 = \left(\beta \gamma'_{\ast}(\beta)- \gamma_{\ast}(\beta)\right) \int_{\R^2} |\psi_0|^4.
\end{align*}

\end{lemma}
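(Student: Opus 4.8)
The plan is to run a variational (Feynman--Hellmann) argument, exploiting that when the magnetic field is \emph{frozen} the magnetic Dirichlet form is an explicit quadratic polynomial in the coupling. Writing $Q_s[\phi] := \int_{\R^2}|(\nabla+\ii s\bA[|\phi|^2])\phi|^2$ and $N[\phi]:=\int_{\R^2}|\phi|^4$, I would first expand the modulus to record the identity
$$Q_s[\phi]=\int_{\R^2}|\nabla\phi|^2+2s\int_{\R^2}\bA[|\phi|^2]\cdot\bJ[\phi]+s^2\int_{\R^2}\bigl|\bA[|\phi|^2]\,\phi\bigr|^2,$$
valid for every fixed $\phi$; the crucial observation is that $\bA[|\phi|^2]$ does not depend on $s$, so $s\mapsto Q_s[\phi]$ is a genuine quadratic whose coefficients are built only from $\phi$. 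Since $\psi_0$ is a minimizer for $\gamma_{\ast}(\beta)$ it satisfies $\int_{\R^2}|\psi_0|^2=1$, it lies in $H^2(\R^2)$ by Lemma \ref{lem:H2boundminimizer}, and $\bA[|\psi_0|^2]\in L^\infty(\R^2)$ by Proposition \ref{prop:LinfinityestimateonA}, so all three integrals above are finite and $N[\psi_0]\neq 0$.

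The key step is a \emph{touching from above} comparison. I would introduce the scalar function $G(s):=Q_s[\psi_0]/N[\psi_0]$, which by the expansion above is a quadratic polynomial in $s$, hence everywhere differentiable. Because $\psi_0$ is admissible in the variational problem defining $\gamma_{\ast}(s)$ for every $s\ge 0$ (the constraint $\int_{\R^2}|\phi|^2=1$ is independent of the parameter), one has $G(s)\ge \gamma_{\ast}(s)$ for all $s\ge 0$, with equality $G(\beta)=\gamma_{\ast}(\beta)$ precisely because $\psi_0$ minimizes at $s=\beta$. Thus $G-\gamma_{\ast}\ge 0$ attains the value $0$ at $s=\beta$; since both $G$ and $\gamma_{\ast}$ are differentiable there (the latter by hypothesis), the stationarity of a minimum forces $G'(\beta)=\gamma_{\ast}'(\beta)$. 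This is exactly where the differentiability assumption on $\gamma_{\ast}$ is consumed, and I expect it to be the main (indeed the only) subtle point: without it one would obtain merely one-sided Dini-derivative inequalities and would have to establish differentiability by controlling how the minimizer itself varies with $\beta$.

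It then remains to compute and combine, abbreviating $\bA:=\bA[|\psi_0|^2]$ and $\bJ:=\bJ[\psi_0]$. Differentiating the quadratic $G$ gives $\gamma_{\ast}'(\beta)=G'(\beta)=\frac{2}{N[\psi_0]}\bigl(\int_{\R^2}\bA\cdot\bJ+\beta\int_{\R^2}|\bA\psi_0|^2\bigr)$, while the minimizing value reads $\gamma_{\ast}(\beta)=G(\beta)=\frac{1}{N[\psi_0]}\bigl(\int_{\R^2}|\nabla\psi_0|^2+2\beta\int_{\R^2}\bA\cdot\bJ+\beta^2\int_{\R^2}|\bA\psi_0|^2\bigr)$. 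Forming $\beta\gamma_{\ast}'(\beta)-\gamma_{\ast}(\beta)$, the cross terms $\int_{\R^2}\bA\cdot\bJ$ cancel and the $\beta^2$-terms combine to a single one, leaving $\bigl(\beta\gamma_{\ast}'(\beta)-\gamma_{\ast}(\beta)\bigr)N[\psi_0]=\beta^2\int_{\R^2}|\bA[|\psi_0|^2]\psi_0|^2-\int_{\R^2}|\nabla\psi_0|^2$. Recalling $N[\psi_0]=\int_{\R^2}|\psi_0|^4$ yields precisely the claimed identity.
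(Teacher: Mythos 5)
Your proposal is correct and follows essentially the same route as the paper: the paper sets $f(\theta):=\cE_{\theta,\gamma_{\ast}(\theta)}[\psi_0]$, which is exactly $N[\psi_0]\,(G(\theta)-\gamma_{\ast}(\theta))$ in your notation, notes it is non-negative with minimum value $0$ at $\theta=\beta$, and imposes $f'(\beta)=0$ — the same touching-from-above/Feynman--Hellmann argument, differing only in that you normalize by $N[\psi_0]$. Your expansion even silently corrects a typo in the paper's display, where $\int_{\R^2}|\nabla\psi_0|^2$ appears as $\int_{\R^2}|\psi_0|^2$.
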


\begin{proof}
    Consider the function 
\begin{align*}
    f(\theta) &:=  \cE_{\theta,\gamma_{\ast}(\theta)}[\psi_0] = \int_{\R^2} |(\nabla + \ii \theta \bA[|\psi_0|^2]) \psi_0|^2 - \gamma_{\ast}(\theta) \int_{\R^2} |\psi_0|^4
    \\ = & \int_{\R^2} |\psi_0|^2 + \theta^2 \int_{\R^2} |\bA[|\psi_0|^2] \psi_0|^2 + 2 \theta \int_{\R^2} \bA[|\psi_0|^2] \cdot \bJ[\psi_0] - \gamma_{\ast}(\theta) \int_{\R^2} |\psi_0|^4
\end{align*}
for $\theta \in \R.$ Then, $f$ is non-negative, takes its minimum value $0$ at $\theta = \beta$, and is differentiable at $\theta = \beta$. Hence, 
\begin{align*}
    \frac{d}{d \theta} \bigg|_{\theta = \beta}  f(\theta) = 0.
\end{align*}
In conclusion,
\begin{align*}
    2 \beta \int_{\R^2} |\bA[|\psi_0|^2] \psi_0|^2 + 2 \int_{\R^2} \bA[|\psi_0|^2] \cdot \bJ[\psi_0] - \gamma'_{\ast}(\beta) \int_{\R^2} |\psi_0|^4 = 0.
\end{align*}
By combining this with $f(\beta)= 0$, we derive that 
\begin{align*}
    0 &= \int_{\R^2} |\psi_0|^2 + \beta^2 \int_{\R^2} |\bA[|\psi_0|^2] \psi_0|^2 + 2 \beta \int_{\R^2} \bA[|\psi_0|^2] \cdot \bJ[\psi_0] - \gamma_{\ast}(\beta) \int_{\R^2} |\psi_0|^4 \\
    &= \int_{\R^2} |\psi_0|^2 - \beta^2 \int_{\R^2} |\bA[|\psi_0|^2] \psi_0|^2 +  (\beta \gamma'_{\ast}(\beta)-\gamma_{\ast}(\beta)) \int_{\R^2} |\psi_0|^4, 
\end{align*}
which concludes the proof.

\end{proof}
\begin{proposition}
\label{prop:derivativeat2}
    For every $\beta \geq 2$, we have $\gamma_{\ast}$ is differentiable at $\beta$ and
\begin{align*}
    \gamma'_{\ast}(\beta) = 2\pi.
\end{align*}
    
\end{proposition}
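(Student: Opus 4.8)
The plan is to separate the easy range $\beta>2$ from the critical endpoint $\beta=2$, where all the content lies. For $\beta>2$ the remark preceding the statement already gives $\gamma_{\ast}(\theta)=2\pi\theta$ for every $\theta\geq 2$ (see \cite{AtLuThi24}), so $\gamma_{\ast}$ coincides with the linear function $\theta\mapsto 2\pi\theta$ on a whole neighborhood of $\beta$ and is therefore differentiable there with $\gamma'_{\ast}(\beta)=2\pi$. At $\beta=2$ I only have $\gamma_{\ast}=2\pi\,\theta$ on the right, and the real task is to control $\gamma_{\ast}$ from the left as well and match the slope.

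First I would record the two-sided bound $\gamma_{\ast}(\theta)\geq 2\pi\theta$, valid for all $\theta>0$: for $\theta\geq 2$ it is an equality, while for $0<\theta<2$ it follows from the monotonicity of $\theta\mapsto\gamma_{\ast}(\theta)/\theta$ (\cite[Lem.~3.18]{AtLuThi24}) together with $\gamma_{\ast}(2)=4\pi$. Next I would fix a minimizer $\psi$ of $\gamma_{\ast}(2)$ normalized by $\int_{\R^2}|\psi|^2=1$ and use it as a trial state for every $\theta$. Since
\begin{align*}
q(\theta):=\frac{\cE_{\theta,0}[\psi]}{\int_{\R^2}|\psi|^4}=\frac{\int_{\R^2}|\nabla\psi|^2+2\theta\int_{\R^2}\bA[|\psi|^2]\cdot\bJ[\psi]+\theta^2\int_{\R^2}|\bA[|\psi|^2]|^2|\psi|^2}{\int_{\R^2}|\psi|^4}
\end{align*}
is a quadratic polynomial in $\theta$ with nonnegative leading coefficient $c_2$, admissibility of $\psi$ gives $\gamma_{\ast}(\theta)\leq q(\theta)$ for all $\theta$, while $\psi$ being a minimizer at $\theta=2$ gives $q(2)=\gamma_{\ast}(2)=4\pi$. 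This furnishes an upper tangent parabola that touches the graph of $\gamma_{\ast}$ exactly at $\theta=2$.

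A squeeze then finishes the proof. The smooth polynomial $q(\theta)-2\pi\theta$ is nonnegative (because $q(\theta)\geq\gamma_{\ast}(\theta)\geq 2\pi\theta$) and vanishes at the interior point $\theta=2$, so $\theta=2$ is a minimum and $q'(2)=2\pi$, whence $q(\theta)=4\pi+2\pi(\theta-2)+c_2(\theta-2)^2$. Combining this with the lower bound $\gamma_{\ast}(\theta)\geq 2\pi\theta$ yields
\begin{align*}
0\leq \gamma_{\ast}(\theta)-2\pi\theta\leq c_2(\theta-2)^2,
\end{align*}
i.e. $\gamma_{\ast}(\theta)=2\pi\theta+O((\theta-2)^2)$ as $\theta\to 2$, which is precisely differentiability of $\gamma_{\ast}$ at $2$ with $\gamma'_{\ast}(2)=2\pi$. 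The delicate point, and the step I expect to be the main obstacle, is the existence of a genuine minimizer at the threshold $\theta=2$: this is exactly what forces the tangent parabola to meet the graph at $\theta=2$ and creates the quadratic gap. Without an attained minimizer one would have to run the same estimate along a minimizing sequence and argue that the curvature coefficient $c_2$ remains bounded in the limit, which is considerably more delicate.
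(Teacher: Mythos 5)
Your argument has the same skeleton as the paper's proof: the range $\beta>2$ is dispatched by linearity of $\gamma_{\ast}$ on a neighborhood, and at $\beta=2$ you squeeze $\gamma_{\ast}$ between the Bogomolny line $2\pi\theta$ and a parabola tangent to that line at $\theta=2$. The difference is where the parabola comes from. The paper imports the explicit two-sided bound $2\pi\theta\leq\gamma_{\ast}(\theta)\leq 2\pi+\tfrac{\pi}{2}\theta^2$ from \cite[Prop.~3.34]{AtLuThi24} and then computes difference quotients; since $2\pi+\tfrac{\pi}{2}\theta^2=2\pi\theta+\tfrac{\pi}{2}(\theta-2)^2$, that upper bound is exactly a tangent parabola of the kind you construct, except that it is an explicit trial-state estimate valid for all $\theta$ and requires no attainment of the infimum. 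You instead generate the parabola $q(\theta)$ abstractly from a minimizer of $\gamma_{\ast}(2)$; your tangency argument ($q-2\pi\theta\geq 0$ with an interior zero at $\theta=2$ forces $q'(2)=2\pi$), and your lower bound via the monotonicity of $\gamma_{\ast}(\theta)/\theta$ together with $\gamma_{\ast}(2)=4\pi$, are both correct.

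The one genuine gap is the step you flag yourself and do not resolve: the existence of a minimizer for $\gamma_{\ast}(2)$. Without it your upper bound collapses --- a minimizing sequence only gives parabolas passing near $(2,4\pi)$, with no a priori control on the curvature coefficients $c_2$, exactly as you observe. This is, however, not an open problem but a citation: by \cite[Thm.~2]{AtLuThi24} the infimum $\gamma_{\ast}(\beta)=2\pi\beta$, $\beta\geq 2$, is attained precisely for $\beta\in 2\N$ (at $\beta=2$ by the Jackiw--Pi/Liouville soliton), and indeed the present paper relies on attained minimizers at these thresholds in Theorem \ref{thm:criticalCSS} and Corollary \ref{cor:staticsolutions}. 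So your proof closes with one reference, and it has the merit of explaining \emph{why} the bound of \cite[Prop.~3.34]{AtLuThi24} must be tangent at $\theta=2$; but note that the paper's route is strictly more economical, since the explicit bound makes the attainment question irrelevant.
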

\begin{proof}
    If $\beta >2$, then the proof follows by $\gamma_{\ast}(\beta) = 2\pi \beta$ for every $\beta \geq 2$; see \cite[Thm. 2]{AtLuThi24}. For the case of $\beta =2$, we use \cite[Prop. 3.34]{AtLuThi24} to derive the following estimate 
\begin{align*}
  2 \pi \theta \leq  \gamma_{\ast} (\theta) \leq 2 \pi + \frac{\pi}{2} \theta^2,
\end{align*}
 for every $\theta \in \R_+ .$ Hence,
 \begin{align*}
       \frac{\pi}{2}(\theta +2) \leq \frac{\gamma_{\ast}(\theta) - \gamma_{\ast}(2)}{\theta -2} \leq 2 \pi
 \end{align*}
for every $\theta <2$ and 
\begin{align*}
 2 \pi \leq   \frac{\gamma_{\ast}(\theta) - \gamma_{\ast}(2)}{\theta -2} \leq \frac{\pi}{2}(\theta +2),
\end{align*}
    for every $\theta >2.$ In conclusion,
\begin{align*}
  \gamma'_{\ast}(2) =  \lim_{\theta \to 2} \frac{\gamma_{\ast}(\theta) - \gamma_{\ast}(2)}{\theta -2} = 2 \pi.
\end{align*}

\end{proof}

\begin{corollary}
\label{cor:staticsolutions}
  Let $\beta, \gamma  \in \R$, and $\psi_0 \in H^1(\R^2)$ such that $\cE_{\beta,\gamma}[\psi_0]=0$. If $\beta \|\psi_0\|^2_{L^2} \geq 2$, then $\beta \|\psi_0\|^2_{L^2} \in 2\N$ and all the solutions $\psi \in C([0,T],H^2(\R^2))$ to \eqref{eq:CSNLS} with the initial data $\psi_0$ are static, i.e., $\psi = \psi_0.$ Moreover, if $\beta \|\psi_0\| ^2_{L^2} < \sqrt{\frac{1}{C_H}}$, then there exists no static solution  $\psi \in C([0,T],H^2(\R^2))$ to \eqref{eq:CSNLS} with the initial data $\psi_0$ for any $T>0$.
\end{corollary}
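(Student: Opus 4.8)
The plan is to reduce both assertions to determining the sign of $\lambda_{\beta}(\psi_0)$ after rescaling to unit mass. Write $m := \|\psi_0\|_{L^2}^2$, $\alpha := \beta m$, and $\phi_0 := \psi_0/\|\psi_0\|_{L^2}$, so that $\int_{\R^2}|\phi_0|^2 = 1$. Using the scaling identity $\cE_{\beta,\gamma}[\psi_0] = m\,\cE_{\alpha,\gamma m}[\phi_0]$ (exactly as in the blow-up part of the proof of \Cref{thm:criticalconstant}), the hypothesis $\cE_{\beta,\gamma}[\psi_0]=0$ places us in the critical situation of \Cref{thm:criticalCSS}, in which $\phi_0$ is a minimizer for $\gamma_{\ast}(\alpha)$. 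Since $\bA$ is linear, rescaling the defining formula for $\lambda_{\beta}$ gives
\begin{align*}
\lambda_{\beta}(\psi_0) = \alpha^2 \int_{\R^2}\bigl|\bA[|\phi_0|^2]\phi_0\bigr|^2 - \int_{\R^2}|\nabla\phi_0|^2,
\end{align*}
and by \Cref{thm:criticalCSS} the unique solution with data $\psi_0$ is $\psi(t,\cdot)=\psi_0\,e^{-\ii t\lambda_{\beta}(\psi_0)/2}$, which is static exactly when $\lambda_{\beta}(\psi_0)=0$. So everything comes down to the sign of $\lambda_{\beta}(\psi_0)$.

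For the first assertion, suppose $\alpha\ge 2$. By \cite[Thm.~2]{AtLuThi24} we have $\gamma_{\ast}(\theta)=2\pi\theta$ for $\theta\ge 2$, so $\gamma_{\ast}$ is differentiable at $\alpha$ with $\gamma'_{\ast}(\alpha)=2\pi$ (immediate from linearity for $\alpha>2$, and \Cref{prop:derivativeat2} for $\alpha=2$). Since $\phi_0$ is a minimizer for $\gamma_{\ast}(\alpha)$, I would apply \Cref{lem:lambdacomputation} with parameter $\alpha$ to get
\begin{align*}
\lambda_{\beta}(\psi_0) = \bigl(\alpha\,\gamma'_{\ast}(\alpha)-\gamma_{\ast}(\alpha)\bigr)\int_{\R^2}|\phi_0|^4 = (2\pi\alpha-2\pi\alpha)\int_{\R^2}|\phi_0|^4 = 0,
\end{align*}
so $\psi=\psi_0$ is static. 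For the quantization $\alpha\in 2\N$, I would use that a minimizer of $\gamma_{\ast}(\alpha)=2\pi\alpha$ saturates the Bogomolny bound and is therefore self-dual; the resulting first-order equation forces $|\phi_0|^2$ to solve a Liouville equation whose finite-mass solutions satisfy $\int_{\R^2}|\phi_0|^2 = 2d/\alpha$ with $d\ge 1$ the integer vortex number, so the normalization $\int_{\R^2}|\phi_0|^2=1$ yields $\alpha=2d\in 2\N$. I would invoke the classification of minimizers of $\gamma_{\ast}$ from \cite{AtLuThi24} for this step rather than reproduce it.

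For the second assertion, assume $\alpha<\sqrt{1/\CH}$ (so in particular $\alpha<2$). Applying the Hardy-type inequality \eqref{eq:AF-H1} to $\phi_0$, together with the diamagnetic bound $|\nabla|\phi_0||\le|\nabla\phi_0|$ and $\int_{\R^2}|\phi_0|^2=1$, I obtain
\begin{align*}
\alpha^2\int_{\R^2}\bigl|\bA[|\phi_0|^2]\phi_0\bigr|^2 \le \alpha^2\,\CH\int_{\R^2}\bigl|\nabla|\phi_0|\bigr|^2 \le \alpha^2\,\CH\int_{\R^2}|\nabla\phi_0|^2,
\end{align*}
hence $\lambda_{\beta}(\psi_0)\le(\alpha^2\CH-1)\int_{\R^2}|\nabla\phi_0|^2$. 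Since $\phi_0\in H^1(\R^2)$ with $\int_{\R^2}|\phi_0|^2=1$ cannot be a constant, we have $\int_{\R^2}|\nabla\phi_0|^2>0$, and $\alpha^2\CH<1$ forces $\lambda_{\beta}(\psi_0)<0$. The unique solution $\psi_0\,e^{-\ii t\lambda_{\beta}(\psi_0)/2}$ therefore carries a nontrivial phase and is not static, so by the uniqueness in \Cref{thm:criticalCSS} no static solution with data $\psi_0$ exists.

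I expect the quantization $\alpha\in 2\N$ to be the main obstacle: the vanishing and the strict negativity of $\lambda_{\beta}(\psi_0)$ both follow mechanically from \Cref{lem:lambdacomputation}, \Cref{prop:derivativeat2}, and the Hardy inequality \eqref{eq:AF-H1}, whereas the integrality statement genuinely requires the self-dual (Bogomolny) structure of the minimizers, the associated Liouville equation for $|\phi_0|^2$, and the flux quantization it entails, which I would either reconstruct or import from \cite{AtLuThi24}.
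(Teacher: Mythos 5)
Your proposal is correct and follows essentially the same route as the paper: reduce to the sign of $\lambda_{\beta}(\psi_0)$ via Theorem \ref{thm:criticalCSS}, obtain $\lambda_{\beta}(\psi_0)=0$ for $\beta\|\psi_0\|_{L^2}^2\ge 2$ from Lemma \ref{lem:lambdacomputation} together with Proposition \ref{prop:derivativeat2} and $\gamma_{\ast}(\theta)=2\pi\theta$, import the quantization $\beta\|\psi_0\|_{L^2}^2\in 2\N$ from \cite[Thm.~2]{AtLuThi24}, and get $\lambda_{\beta}(\psi_0)<0$ in the small-$\beta$ regime from the diamagnetic and Hardy-type inequalities. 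The only cosmetic differences are that you make the unit-mass rescaling and the strict positivity of $\int_{\R^2}|\nabla\phi_0|^2$ explicit, which the paper leaves implicit.
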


\begin{proof}
 First, we consider the case that $\beta \|\psi_0\|^2_{L^2} \geq 2$. Then, $\frac{\psi_0}{\|\psi_0\|_{L^2}}$ is a minimizer for $\gamma_{\ast}\left(\beta \|\psi_0\|^2_{L^2} \right)$. Hence, by \cite[Thm. 2]{AtLuThi24}, we imply that $\beta \|\psi_0\|^2_{L^2} \in 2 \N$. Now, assume that $\psi \in C([0,T],H^2(\R^2))$ is a solution to \eqref{eq:CSNLS} with the initial data $\psi_0.$ Then, by Theorem \ref{thm:criticalCSS}, we have 
    \begin{align*}
        \psi(t,\cdot) = \psi_0 \, e^{-\frac{\ii \lambda_{\beta}(\psi_0) t}{2}}, 
    \end{align*}
    for every $t \in [0,T]$, where we remind that
\begin{align*}
    \lambda_{\beta}(\psi_0) =  \frac{1}{\|\psi_0\|^2_{L^2}} \left(\beta^2 \int_{\R^2} |\bA[|\psi_0|^2] \psi_0| ^2 - \int_{\R^2} |\nabla \psi_0|^2\right).
\end{align*}
    Moreover, by Proposition \ref{prop:derivativeat2} and Lemma \ref{lem:lambdacomputation}, we have $\lambda_{\beta}(\psi_0)=0$. Hence, $
          \psi(t,\cdot) = \psi_0.$
This completes the first part. For the second part, assume that $\beta \|\psi_0\|^2_{L^2}< \sqrt{\frac{1}{\CH}}$. By Diamagnetic inequality \eqref{eq:AF-diamagnetic} and Hardy-type inequality \eqref{eq:AF-H1}, we have
\begin{align*}
    \int_{\R^2} |\nabla \psi_0|^2 \geq \frac{1}{\CH \|\psi_0\|^4_{L^2}} \int_{\R^2} |\bA[|\psi_0|^2] \psi_0| ^2.
\end{align*}
Hence, by Theorem \ref{thm:criticalCSS}, we derive that all the solutions $\psi$ of \eqref{eq:CSNLS} are of the form
  \begin{align*}
        \psi(t,\cdot) = \psi_0 \, e^{-\frac{\ii \lambda_{\beta}(\psi_0) t}{2}}, 
    \end{align*}
    for every $t \in [0,T]$, where 
\begin{align*}
    \lambda_{\beta}(\psi_0) = \frac{1}{\|\psi_0\|^2_{L^2}} \left( \beta^2 \int_{\R^2} |\bA[|\psi_0|^2] \psi_0| ^2 - \int_{\R^2} |\nabla \psi_0|^2 \right) <0.
\end{align*}
This completes the second part.
    
\end{proof}

\newcommand{\etalchar}[1]{$^{#1}$}

\end{document}